      \newcommand {\al}   {\alpha}          
      \newcommand {\del}  {\delta}          
              \newcommand {\ve}   {\varepsilon}
                \newcommand {\Sig}    {\Sigma}
      \newcommand {\om}   {\omega}          \newcommand {\Om}  {\Omega}
      \newcommand {\pl}   {\partial}        
           \newcommand {\UUU}  {U}
      \newcommand {\RRR}  {{\mathbb R}}     \newcommand {\AAA}  {A}              
                     \newcommand{\Convex}{\mathcal{C}}
      \newcommand {\ZZZ}  {{\mathbb Z}}     \newcommand {\DDD}  {{D}}            \newcommand{\NNN}{\mathcal{N}}
              \newcommand {\BBB}  {B}              \newcommand {\PPP}{\hat{P}}
      \newcommand {\CCC}  {C}       \newcommand {\SSS}  {{S}}
              \newcommand {\vk}  {\varkappa}
      \newcommand {\trap}{\text{\pspolygon[linewidth=0.4pt](-0.02,0)(0.32,0)(0.23,0.22)(0.07,0.22)\ \ \,}}
      \newcommand {\trapp}{\text{\scalebox{0.6}{\pspolygon[linewidth=0.4pt](-0.02,0)(0.32,0)(0.23,0.22)(0.07,0.22)}\ \ \,}}
      \newcommand {\omom}   {k}
      \newcommand {\bbss}  {\begin{slide}}
      \newcommand {\eess}  {\end{slide}}
      \newcommand {\beq}  {\begin{equation}}
      \newcommand {\eeq}  {\end{equation}}
      \newcommand {\beqo}  {\begin{equation*}}
      \newcommand {\eeqo}  {\end{equation*}}
      \newtheorem{theorem}{Theorem}
      \newtheorem{lemma}{Lemma}
      \newtheorem{definition}{Definition}
       \newtheorem{remark}{Remark}
      \newtheorem{problem}{Problem}
      \newtheorem{propo}{Proposition}
\author{Alexander Plakhov\thanks{Center for R\&D in Mathematics and Applications, Department of Mathematics, University of Aveiro, Portugal and Institute for Information Transmission Problems, Moscow, Russia}}
\title{Newton's problem of minimal resistance under the single-impact assumption}
\date{}
\begin{document}

\maketitle

\begin{abstract}
A parallel flow of non-interacting point particles is incident on a body at rest. When hitting the body's surface, the particles are reflected elastically. Assume that each particle hits the body at most once (SIC condition); then the force of resistance of the body along the flow direction can be written down in a simple analytical form.

The problem of minimal resistance within this model was first considered by Newton \cite{N} in the class of bodies with a fixed length $M$ along the flow direction and with a fixed maximum orthogonal cross section $\Om$, under the additional conditions that the body is convex and rotationally symmetric. Here we solve the problem (first stated in \cite{BFK}) for the wider class of bodies satisfying SIC and with the additional conditions removed. The scheme of solution is inspired by Besicovitch's method of solving the Kakeya problem \cite{Bes}. If $\Om$ is a disc, the decrease of resistance as compared with the original Newton problem is more than twofold; the ratio tends to 2 as $M \to 0$ and to $20.25$ as $M \to \infty.$ We also prove that the infimum of resistance is 0 for a wider class of bodies with both single and double impacts allowed.
\end{abstract}

\begin{quote}
{\small {\bf Mathematics subject classifications:} 49Q10, 49K30}
\end{quote}

\begin{quote}
{\small {\bf Key words and phrases:} Newton's problem of minimal resistance, shape optimization, Kakeya problem, billiards.}
\end{quote}

\section{Introduction}

{\bf 1.1.} Consider a bounded domain (a {\it body}) $B$ in Euclidean space $\RRR^3$ and a parallel flow of point particles with unit velocity incident on $B$. If a particle hits the body at a regular point of the boundary $\pl B$, it is reflected according to the elastic (billiard) law. A particle can make several reflections from the body. The particles do not interact with each other.

Under some additional assumptions (if, for example, the body $B$ is convex) and knowing the flow density, it is possible to determine the force of pressure of the flow on the body. This force is usually called the {\it force of resistance}. One is traditionally interested in finding the body, in a prescribed class of bodies, that minimizes the projection of this force on the flow direction. This projection is also called the {\it resistance}.

Remarkably, this simple mechanical model is a source of various problems from different areas of mathematics. First stated by Newton \cite{N} in a class of convex axisymmetric bodies, the problem of minimal resistance became one of the problems that gave origin to the calculus of variations. With the symmetry assumption removed, for various classes of convex bodies one comes to unusual and interesting multidimensional variational problems. They have been intensively studied in 1990s and 2000s (see \cite{BrFK}-\cite{LP1}). 

The condition of convexity guarantees absence of multiple collisions and allows one to write down the problem in a convenient analytical form. In the case of nonconvex bodies multiple reflections may occur, and one often needs to use methods of the billiard theory \cite{JDCS,CJM,ebook}. If, additionally, it is allowed to vary the direction $v$ of the flow and one is interested in minimizing the resistance averaged over $v$, one comes to interesting problems related to optimal mass transfer \cite{ARMA,1dimMK,ebook}.

Here we are going to study several problems of minimal resistance for bodies that are (generally) non-symmetric and nonconvex, but satisfy the so-called {\it single impact condition} (SIC): a particle cannot make more than one reflection from the body. This condition assures that the standard analytic formula for the resistance is preserved. Convex bodies obviously satisfy this condition, but not only they: if, for instance, a normal vector at each regular point of the part of $\pl B$ faced to the flow makes an angle smaller than $\pi/6$ with the flow direction, then $B$ satisfies SIC.

In brief, here we consider several classes of bodies with a fixed length along the direction of the flow and fixed maximum orthogonal cross section. We find the infimum of resistance under the additional assumptions that the body is the subgraph of a function and satisfies SIC, and show that the infimum remains unchanged when SIC is relaxed. Finally, we prove that the infimum is zero in the wider class of bodies (not necessarily subgraphs) when SIC is replaced with the so-called {\it double impact condition}.
\vspace{2mm}

{\bf 1.2.} More precisely, consider an orthonormal reference system $x_1,\, x_2,\, z$ in $\RRR^3$ and denote $x = (x_1,x_2)$. We assume that the flow falls vertically downward with velocity $v = (0,0,-1)$.

\begin{definition}\label{o_SIC}\rm
Let $\Om$ be a convex open bounded set in $\RRR^2$. We say that a piecewise smooth function $u : \bar\Om \to \RRR$ satisfies the {\it single impact condition} (SIC), if for any regular point $x \in \Om$ and any $t > 0$ such that $x - t\nabla u(x) \in \bar\Om$,
\beq\label{o SIC}
\frac{u(x - t\nabla u(x)) - u(x)}{t} \le \frac{1}{2} (1 - |\nabla u(x)|^2).
\eeq
\end{definition}

\begin{remark}\label{zam 01}\rm
According to this definition, the trajectory of a particle after a reflection from the graph of $u$ is situated above the graph. It may, however, touch graph$(u)$ (this happens when the inequality in (\ref{o SIC}) turns into equality); this is not considered to be a reflection.
\end{remark}

\begin{definition}\label{o_S}\rm
Let $M > 0$. We denote by $\SSS_{\Om,M}$ the class of functions $u : \bar{\Om} \to \RRR$ satisfying SIC and such that $0 \le u(x) \le M$ for all $x \in \bar{\Om}$.
\end{definition}

The resistance of a function $u \in \SSS_{\Om,M}$ is defined by
\beq\label{o Resist}
F(u) = \int_\Om \frac{dx}{1 + |\nabla u(x)|^2}.
\eeq

\begin{remark}\label{zam 1}\rm
One can think of a 3D body bounded above by the graph of $u$. There are at least two reasonable ways of defining such a body: either
\beq\label{B_u}
B_u = \{ (x,z) :\, x \in \bar{\Om}, \ 0 \le z \le u(x) \} \subset \RRR^3,
\eeq
or
\beq\label{barb_u}
\bar B_u = \{ (x,z) :\, x \in \bar{\Om}, \ z \le u(x) \}.
\eeq
If $u \in \SSS_{\Om,M}$ (and therefore $u$ satisfies SIC), then in both cases (\ref{B_u}) and (\ref{barb_u}) the vertical component of the momentum imparted to the body by a flow particle is proportional to the integrand in (\ref{o Resist}) (with the mass of the particle being the ratio). Summing up the momenta imparted by all incident particles per unit time, one concludes that the vertical component of the body's resistance force equals $2\rho F(u)$, where $\rho$ is the flow density. That is, formula (\ref{o Resist}) is in agreement with the physical meaning of resistance. On the contrary, if SIC is not satisfied, formula (\ref{o Resist}) has no physical meaning.
\end{remark}

The following problem naturally appears.

\begin{problem}\label{p S}
Find $\inf_{u\in\SSS_{\Om,M}} F(u)$.
\end{problem}

The condition SIC and Problem \ref{p S} were first stated in \cite{BFK} and further discussed in the papers \cite{CL1}-\cite{Kawohl}.

Our aim in Problem \ref{p S} is to minimize the resistance over a class of bodies with the fixed maximum cross section $\Om$ orthogonal to the direction $v$ of the flow and fixed length $M$ along this direction. If $M$ goes to infinity, the infimum goes to zero; to see it, it suffices to consider a sequence of cones with the base $\Om$ and height $M \to \infty$.

\begin{remark}\label{zam 3}\rm
It is possible to consider {\it nonconvex} domains $\Om$, define the condition SIC, and formulate the minimization problem for the class of corresponding functions. However, this consideration would lead to technical complications. For instance, SIC would have different forms depending on the physical interpretation ($B_u$ or $\bar B_u$) of the body. If $\Om$ is convex, SIC does not depend on this interpretation.
\end{remark}

Take the body $B_u$ (\ref{B_u}) corresponding to a piecewise smooth function $u : \bar{\Om} \to \RRR$ and consider the billiard in $\RRR^3 \setminus B_u$. Let a billiard particle initially move according to $x(t) = x$,\, $z(t) = -t$, then make several reflections (maybe none) at regular points of $\pl B_u$, and finally move freely; we denote its final velocity by $v^+(x;u) = (v_1^+(x;u),\, v_2^+(x;u),\, v_3^+(x;u))$. One obviously has $v^+(x;u) = v$ for $x \not\in \bar{\Om}$ (recall that $v = (0,0,-1)$).

\begin{definition}\label{o_scatt_reg}\rm
We say that the billiard scattering is {\it regular}, if the function $v^+(\cdot;u)$ is defined on a full-measure subset of $\RRR^2$ and is measurable.
\end{definition}

\begin{definition}\label{o_U}\rm
We denote by $\UUU_{\Om,M}$ the class of piecewise smooth functions $u : \bar{\Om} \to \RRR$ such that

(a) $0 \le u(x) \le M$ for all $x \in \bar{\Om}$ and

(b) the corresponding billiard scattering is regular.
\end{definition}

\begin{remark}\label{zam 4}\rm
It is easy to provide a function $u$ that does not satisfy (b). Suppose that a part of {\rm graph}$(u)$ is a piece of a paraboloid of rotation whose focus coincides with a singular point of {\rm graph}$(u)$. Then the function $v^+(\cdot;u)$ is not defined in the projection of that piece of paraboloid on the $x$-plane.
\end{remark}

The resistance of $u \in \UUU_{\Om,M}$ is defined by
\beq\label{o Res2}
F(u) = \int_\Om \frac{1 + v_3^+(x;u)}{2}\, dx.
\eeq
This formula has a strong physical meaning. Indeed, a particle with mass $\mu$ that initially moves according to $x(t) = x$,\, $z(t) = -t$, imparts the momentum $\mu v - \mu v^+(x;u)$ to the body, and the projection of this momentum on the direction of $v$ equals
$$
\mu \langle v - v^+(x;u), \ v \rangle = \mu (1 + v_3^+(x;u));
$$
here and in what follows $\langle \cdot\,, \cdot \rangle$ means scalar product. Summing all the imparted momenta, one finds that the third component of the physical force of resistance of the body $B_u$ to a flow with constant density $\rho$ equals $-2\rho F(u)$.

One has $\SSS_{\Om,M} \subset \UUU_{\Om,M}$, and for $u \in \SSS_{\Om,M}$ formulae (\ref{o Resist}) and (\ref{o Res2}) give the same value. Indeed, if $u \in \SSS_{\Om,M}$ then $v^+(x;u)$ is defined for all regular points $x$ of $u$ and, moreover, can be determined explicitly. Namely, a normal vector to graph$(u)$ at $(x,u(x))$ is $n = (\nabla u(x), -1)$, and
$$
v^+(x;u) = v - \frac{2\langle v, \, n\rangle}{|n|^2}\, n = \frac{1}{1 + |\nabla u(x)|^2} \ \big(-2\nabla u(x), \ 1 - |\nabla u(x)|^2\big).
$$
Therefore $(1 + v_3^+(x;u))/2 = 1/(1 + |\nabla u(x)|^2)$, and so, the integrals in the right hand sides of (\ref{o Resist}) and (\ref{o Res2}) coincide. This means that the value $F(u)$ is well defined.

We have the following problem.

\begin{problem}\label{p U}
Find $\inf_{u\in\UUU_{\Om,M}} F(u)$.
\end{problem}

From the mechanical point of view, it makes sense to consider a wider class of bodies whose surface faced to the flow is not necessarily the graph of a function. For a 3D body $B$ we again consider the billiard in $\RRR^3 \setminus B$ and analogously define the notion of regular scattering in terms of the final velocity $v^+(x;B) = (v_1^+(x;B),\, v_2^+(x;B),\, v_3^+(x;B)), \ x \in \RRR^2$.

\begin{definition}\label{o_B}\rm
Denote by $\BBB_{\Om,M}$ the class of 3D domains $B$ (bodies) such that the surface $\pl B \setminus (\pl\Om \times \RRR)$ is piecewise smooth and

(a) $\bar{\Om} \times \{0\} \subset B \subset \bar{\Om} \times [0,\, M]$;

(b) the corresponding billiard scattering is regular.
\end{definition}

\begin{remark}\label{zam 5}\rm
The convex set $\Om$ is not necessarily piecewise smooth. It may even happen that the set of singular points of $\pl\Om$ is everywhere dense in $\pl\Om$. In that case the part of the boundary $\pl B$ that belongs to the cylinder $\pl\Om \times \RRR$ contains an everywhere dense set of singular points. That is why we only require that the complementary part of the boundary $\pl B$ is piecewise smooth.
\end{remark}

The resistance of a body $B \in \BBB_{\Om,M}$ is defined by
\beq\label{o Res3}
R(B) = \int_\Om \frac{1 + v_3^+(x;B)}{2}\, dx.
\eeq

\begin{definition}\label{o_u_B}\rm
For a body $B \in \BBB_{\Om,M}$ we define the function $u_B : \bar{\Om} \to \RRR$ by
$$
u_B(x) = \sup \{ z : (x,z) \in B \}
$$
(see Fig.~\ref{fig_uB}).
\end{definition}

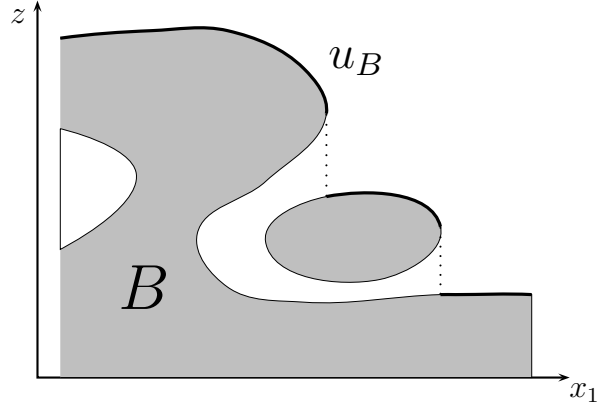
\begin{figure}[h]
\begin{picture}(0,145)
\rput(4,0.2){
\scalebox{1}{
\psline{<->}(-0.5,5)(-0.5,0)(6.5,0)
\pscustom[linewidth=0pt,fillstyle=solid,fillcolor=lightgray]{
\psecurve[linewidth=0pt](-0.7,4.4)(-0.2,4.5)(1,4.6)(2,4.6)(3,4.1)(3.3,3.5)(2.5,2.6)(1.6,1.9)(2,1.2)(3,1)(4.8,1.1)(5,1.1)(6,1.1)(7,1)
\psline(6,0)(-0.2,0)
}
\pscustom[linewidth=0pt,fillstyle=solid,fillcolor=white]
{\pscurve[linewidth=0pt](-0.2,3.3)(0.8,2.7)(-0.2,1.7)
\psline[linewidth=0pt](-0.2,1.7)(-0.2,3.3)}
\psecurve[linewidth=0pt,fillstyle=solid,fillcolor=lightgray](4,1.3)(4.8,2)(3.3,2.4)(2.5,1.8)(4,1.3)(4.8,2)(3.3,2.4)
\psecurve[linewidth=1.2pt](-0.7,4.4)(-0.2,4.5)(1,4.6)(2,4.6)(3,4.1)(3.3,3.5)(2.5,2.6)
\psline[linestyle=dotted](3.3,3.5)(3.3,2.4)
\psline[linestyle=dotted](4.8,2)(4.8,1.1)
\psecurve[linewidth=1.2pt](4,1.3)(4.8,2)(3.3,2.4)(2.5,1.8)
\psecurve[linewidth=1.2pt](3,1)(4.8,1.1)(5,1.1)(6,1.1)(7,1)
\rput(0.9,1.2){\scalebox{2}{$B$}}
\rput(3.7,4.2){\scalebox{1.5}{$u_B$}}
\rput(6.7,-0.2){\scalebox{1}{$x_1$}}
\rput(-0.75,4.8){\scalebox{1}{$z$}}
}
}
\end{picture}
\caption{The cross section of a body $B$ and of graph$(u_B)$ by a vertical plane $x_2 =$\,const.}
\label{fig_uB}
\end{figure}

The point of first reflection of a flow particle from the body always lies on {\rm graph}$(u_B)$. The condition SIC for $u_B$ guarantees that the trajectory of the particle after the first reflection lies above graph$(u_B)$ and therefore there are no further reflections from $B$.

\begin{remark}\label{zam 6}\rm
The class of bodies $\BBB_{\Om,M}$ is, in a sense, "larger" than the class of functions $\UUU_{\Om,M}$. In particular, the composition of mappings $u \mapsto B_u \mapsto u_{B_u}$ is the identity of $\UUU_{\Om,M}$, however the composition $B \mapsto u_B \mapsto B_{u_B}$ {\rm is not} the identity of $\BBB_{\Om,M}$, but rather a projection onto a proper subset of $\BBB_{\Om,M}$.
\end{remark}

\begin{definition}\label{o_B}\rm
We denote by $\Sig_{\Om,M}$ the class of bodies $B \in \BBB_{\Om,M}$ such that $u_B$ satisfies SIC.
\end{definition}

The following proposition states that the infima of resistance over {\it bodies} satisfying SIC and over {\it functions} satisfying SIC coincide.

\begin{propo}\label{propo}
$\inf_{B\in\Sig_{\Om,M}} R(B) = \inf_{u\in\SSS_{\Om,M}} F(u)$.
\end{propo}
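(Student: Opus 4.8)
The plan is to establish a value-preserving correspondence between $\SSS_{\Om,M}$ and $\Sig_{\Om,M}$ via the two maps $u \mapsto B_u$ and $B \mapsto u_B$, so that the two sets of resistance values—and hence their infima—coincide. The heart of the matter is the pointwise identity
\[
R(B) = F(u_B) \qquad \text{for every } B \in \Sig_{\Om,M},
\]
combined with the fact that $u_B \in \SSS_{\Om,M}$. Granting this, for any $B \in \Sig_{\Om,M}$ we get $\inf_{u\in\SSS_{\Om,M}} F(u) \le F(u_B) = R(B)$, and taking the infimum over $B$ yields $\inf_\SSS F \le \inf_\Sig R$. That $u_B$ lies in $\SSS_{\Om,M}$ is immediate from the definition of $\Sig_{\Om,M}$: by hypothesis $u_B$ is piecewise smooth and satisfies SIC, while the requirement $\bar{\Om}\times\{0\}\subset B\subset \bar{\Om}\times[0,M]$ in the definition of $\BBB_{\Om,M}$ forces $0 \le u_B \le M$.

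For the reverse inequality I would pass through $B_u$. Given $u \in \SSS_{\Om,M}$, the body $B_u$ of (\ref{B_u}) satisfies $u_{B_u} = u$ by Remark \ref{zam 6} (using $\SSS_{\Om,M}\subset\UUU_{\Om,M}$), so $u_{B_u}$ satisfies SIC; together with $0 \le u \le M$ and the regularity of scattering inherited from $\UUU_{\Om,M}$, this places $B_u$ in $\Sig_{\Om,M}$. The same central identity then gives $R(B_u) = F(u_{B_u}) = F(u)$, which is consistent with the agreement of (\ref{o Resist}) and (\ref{o Res2}) already recorded above. Hence $\inf_\Sig R \le R(B_u) = F(u)$ for every $u$, and taking the infimum over $u$ gives $\inf_\Sig R \le \inf_\SSS F$. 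The two inequalities together prove the proposition.

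To prove the identity $R(B) = F(u_B)$ I would trace a single incident particle. A particle descending along the vertical line through $x \in \Om$ first meets $\bar B$ at its topmost point over $x$, namely $p = (x, u_B(x))$; since $B$ is open and $u_B(x)$ is the supremum of the heights attained in $B$ above $x$, the point $p$ belongs to $\bar B \setminus B = \pl B$, so in fact $\mathrm{graph}(u_B) \subset \pl B$. Consequently the reflecting surface at the first impact \emph{is} the surface $\mathrm{graph}(u_B)$, and the outward normal to $B$ at $p$ coincides with the normal $(\nabla u_B(x),\, -1)$ to $\mathrm{graph}(u_B)$; the reflected velocity is therefore exactly $v^+(x; u_B)$. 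Because $u_B$ satisfies SIC, the discussion preceding this proposition guarantees that the subsequent trajectory stays above $\mathrm{graph}(u_B)$ and never re-enters $B$, so no further reflection occurs and $v^+(x;B) = v^+(x;u_B)$. Integrating $(1 + v_3^+)/2$ over $\Om$ and using the explicit value $\tfrac12(1 + v_3^+(x;u_B)) = 1/(1 + |\nabla u_B(x)|^2)$ yields $R(B) = F(u_B)$.

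The main obstacle is the measure-theoretic bookkeeping hidden in the phrase ``at a regular point.'' I must show that for almost every $x \in \Om$ the first-impact point $p = (x, u_B(x))$ is a regular point of $\pl B$ at which $u_B$ is differentiable and the impact is transversal, so that the elastic law genuinely applies and the coincidence of normals holds; the exceptional set (images of the singular locus of the piecewise-smooth surface, points of tangential grazing, and points where the supremum defining $u_B$ is not realized at a regular boundary point) must be shown to have zero planar measure. This is precisely where the regular-scattering hypothesis in the definition of $\BBB_{\Om,M}$ and the piecewise smoothness of $\pl B \setminus (\pl\Om \times \RRR)$ enter: they ensure both that $v^+(\cdot;B)$ is defined a.e. and measurable and that $v^+(x;B) = v^+(x;u_B)$ off a null set, so that the integrals defining $R(B)$ and $F(u_B)$ are taken over essentially the same integrand.
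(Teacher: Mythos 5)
Your proposal is correct and follows essentially the same route as the paper: the paper's proof consists precisely of noting (i) that $B \in \Sig_{\Om,M}$ gives $u_B \in \SSS_{\Om,M}$ with $R(B) = F(u_B)$, and (ii) that $u \in \SSS_{\Om,M}$ gives $B_u \in \Sig_{\Om,M}$ with $F(u) = R(B_u)$, which is exactly your two-sided value-preserving correspondence. Your write-up merely fills in the details (the first impact occurring on $\mathrm{graph}(u_B)$, SIC preventing further reflections, and the a.e.\ identification of the integrands) that the paper leaves to the reader.
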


\begin{proof}
The proof is quite easy. It suffices to note that

(i) if $B \in \Sig_{\Om,M}$ then $u_B \in \SSS_{\Om,M}$ and $R(B) = F(u_B)$, and

(ii) if $u \in \SSS_{\Om,M}$ then $B_u \in \Sig_{\Om,M}$ and $F(u) = R(B_u).$
\end{proof}

\begin{problem}\label{p B}
Find $\inf_{B\in\BBB_{\Om,M}} R(B)$.
\end{problem}

Actually, a problem very similar to Problem \ref{p B} was solved in \cite{ebook}, and the method used there can be easily adapted to show that $\inf_{B\in\BBB_{\Om,M}} R(B) = 0$. In other words, if one allows multiple reflections, the resistance can be made arbitrarily small.
It is then natural to fix the maximal allowed number of reflections and study the corresponding problem. Surprisingly enough, even if only single and double reflections are allowed, the infimum of resistance equals zero.

\begin{definition}\label{o_D}\rm
We say that a body $B \in \BBB_{\Om,M}$ satisfies the {\it double impact condition} (DIC), if each incident particle with the initial velocity $v = (0,0,-1)$ has no more than two reflections from $B$. The class of connected bodies satisfying DIC is denoted by $\DDD_{\Om,M}$.
\end{definition}

\begin{problem}\label{p D}
Find $\inf_{B\in\DDD_{\Om,M}} R(B)$.
\end{problem}

It is not difficult to find a lower bound for the resistance of a function $u \in \SSS_{\Om,M}$. To that end, put $d(x) = \text{dist}(x, \pl\Om)$ for $x \in \Om$ and define
\beq\label{o_phi}
\phi(\Om,M) = \int_\Om \frac 12 \bigg( 1 - \frac{M}{\sqrt{M^2 + d^2(x)}}\bigg)\, dx.
\eeq
It was first noticed in \cite {BFK} that
\beq\label{ineq1}
\inf_{u \in \SSS_{\Om,M}} F(u) \ge \phi(\Om,M).
\eeq
For the sake of the reader's convenience, here we reproduce the proof of (\ref{ineq1}).

Let a particle be reflected from a regular point $(x,z)$ of the graph of $u \in \SSS_{\Om,M}$. It may further happen that the particle does or does not intersect the horizontal plane $z = 0$. In the former case let $(x',0)$ be the point of intersection; then the final velocity is
$$
v^+(x;u) = \frac{1}{\sqrt{z^2 + |x'-x|^2}}\, (x' - x,\, -z).
$$
We have $x' \not\in \Om$  and therefore $|x-x'| \ge d(x)$, and $0 \le z \le M$; therefore
$$
v_3^+(x;u) \ge - \frac{M}{\sqrt{M^2 + d^2(x)}}.
$$
In the latter case we have $v_3^+(x;u) \ge 0$. In both cases the integrand in (\ref{o Res2}) is greater than or equal to $\frac 12\, (1 - M/\sqrt{M^2 + d^2(x)})$, and so, $F(u) \ge \phi(\Om,M).$

The following theorems provide solutions for problems \ref{p S} -- \ref{p D}.

\begin{theorem}\label{t1}
\beq\label{ft1}
\inf_{u \in \SSS_{\Om,M}} F(u) = \phi(\Om,M).
\eeq
\end{theorem}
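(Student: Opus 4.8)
The lower bound $\inf_{u\in\SSS_{\Om,M}}F(u)\ge\phi(\Om,M)$ is already in hand, so the plan is to prove the matching upper bound: for every $\eps>0$ I would construct a function $u=u_\eps\in\SSS_{\Om,M}$ with $F(u)<\phi(\Om,M)+\eps$. Since for $u\in\SSS_{\Om,M}$ one has $F(u)=\int_\Om dx/(1+|\nabla u(x)|^2)$, the value of $F$ depends only on $|\nabla u|$; the genuine constraints are the box condition $0\le u\le M$ and SIC. Writing $g(d)=\frac12\big(1-M/\sqrt{M^2+d^2}\big)$, so that $\phi=\int_\Om g(d(x))\,dx$, the pointwise identity $\frac{1}{1+|p|^2}=g(d)$ forces the ``ideal'' slope
\[
|p|=p^*(d)=\sqrt{\frac{\sqrt{M^2+d^2}+M}{\sqrt{M^2+d^2}-M}} .
\]
Thus the whole task reduces to building an admissible graph whose gradient has magnitude $\approx p^*(d(x))$ on all of $\Om$ except a set of arbitrarily small measure.

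To do this I would first cut off a thin boundary layer $\{d(x)<\del\}$ (of measure $o(1)$ as $\del\to0$) and partition the remainder of $\Om$ into small cells $Q_i$ on each of which $d(x)$ and the direction $e_i$ toward the nearest boundary point are nearly constant; on $Q_i$ I aim for $\nabla u\approx -p^*(d_i)\,e_i$, so that the reflected particles are sent toward the boundary while descending from height near $M$. The elementary building block is a single tall tooth (a tent or cone) of slope $p^*(d_i)$ and height $\le M$: by itself it realises the ideal slope over its base and trivially satisfies SIC, since the reflected ray leaves along the downhill side and stays above the graph. The difficulty is that such a tooth covers only a base of diameter $O(M/p^*(d_i))$, so to fill $Q_i$ one must tile it with many teeth, and here the characteristic tension of the problem appears: a steep slope forces the profile to oscillate, the connecting risers or valleys deflect reflected particles into neighbouring teeth (a forbidden second impact), while leaving flat gaps between teeth would make the integrand equal to $1$ on a set of positive measure.

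The resolution --- and this is where Besicovitch's Kakeya construction enters --- is to arrange the teeth by a Perron-tree-type scheme, translating and overlapping their projections and exploiting the second horizontal coordinate to route every reflected ray out of the cell before it can meet another tooth, in such a way that the total measure of the ``defect set'' (risers, collision zones, residual gaps and cell interfaces, where the slope is not the ideal one) is driven below $\eps$. The main obstacle is precisely this step: verifying \emph{global} SIC --- that no particle anywhere undergoes a second reflection --- for the densely packed surface, while simultaneously reconciling the three competing requirements (the steep slope $p^*(d)$, the box constraint $u\in[0,M]$, and single impact) and forcing the defect measure to zero.

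Granting the construction, the integrand equals $g(d(x))$ up to $o(1)$ off the defect set and is bounded by $1$ on it, whence
\[
F(u_\eps)\le\int_\Om g(d(x))\,dx+C\,\eps=\phi(\Om,M)+C\eps ,
\]
and letting $\eps\to0$ together with the lower bound (\ref{ineq1}) yields (\ref{ft1}).
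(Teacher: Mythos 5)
Your outline is right in spirit---the lower bound from (\ref{ineq1}), the ideal slope (your $p^*(d)$ simplifies to $(\sqrt{M^2+d^2}+M)/d$, which is exactly the slope the paper's construction realizes), the plateau picture, and the appeal to Besicovitch---but the proof has a genuine gap, located exactly at the words ``Granting the construction.'' What you grant there \emph{is} the theorem: the whole of the paper's Section 2 consists of building that surface and verifying SIC for it (the elementary mirror/valley pairs and Lemmas \ref{l_1}--\ref{l_3}, the covering statement Lemma \ref{l_main}, and its proof via the doubling procedure in Lemma \ref{l2main}). A proposal that assumes this step establishes nothing beyond the already known inequality (\ref{ineq1}).

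Moreover, the elementary block you chose is provably unworkable, so the gap is not a mere verification left to the reader. If a face has the ideal slope $p=p^*(d)$, the reflected ray descends at slope $(p^2-1)/(2p)=M/d$; hence a particle reflected at height $z_0$ returns to height $0$ after horizontal travel $z_0\,d/M$, which is strictly less than $d\le \text{dist}(x,\pl\Om)$ whenever $z_0<M$. Since every admissible $u$ is nonnegative, that ray must meet the graph again strictly inside $\Om$: a second impact which no ``routing in the second horizontal coordinate'' can prevent, because the obstruction is vertical, not horizontal. In particular your isolated tooth does not ``trivially satisfy SIC'' (its reflected rays hit the surrounding floor), and with the ideal slope only particles reflected at height exactly $M$ can escape---which is precisely why most of a constant-slope tooth's surface, lying at heights below $M$, is useless. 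The paper's block is different in kind: a thin, nearly planar mirror that is a piece of a paraboloid of revolution with focus at a point $(O_i,0)$ at height zero \emph{outside} $\Om$, the mirror itself occupying heights just below $M$ (Lemma \ref{l_1}). The focal property sends every reflected ray, wherever it strikes the mirror, through $(O_i,0)$, and convexity of the paraboloid keeps the ray above the graph (Lemma \ref{l_2}); taking $u=\min_i u_i$ preserves SIC (Lemma \ref{l_3}). The Besicovitch/Perron-tree doubling is then used only for what it can actually deliver: packing the trapezoidal projections of such mirrors so that they cover $\Om$ while the triangular valleys beneath the ray paths have total area less than $\ve$, all foci lie outside $\Om$, and $|x-O_i|<\text{dist}(x,\pl\Om)+\ve$ (Lemma \ref{l_main}); this gives $F(u_\ve)<\phi(\Om,M)+O(\ve)$ (Lemma \ref{l_4}). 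To complete your argument you would have to replace the teeth by these focused mirrors and then actually prove the packing lemma; the constant-slope block is a dead end.
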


\begin{theorem}\label{t2}
\beq\label{ft2}
\inf_{u \in \UUU_{\Om,M}} F(u) = \phi(\Om,M) = \inf_{B \in \Sig_{\Om,M}} R(B).
\eeq
\end{theorem}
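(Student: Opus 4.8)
The plan is to prove (\ref{ft2}) in three pieces, only the last of which carries real content. The rightmost equality $\phi(\Om,M)=\inf_{B\in\Sig_{\Om,M}}R(B)$ is essentially free: Proposition \ref{propo} gives $\inf_{B\in\Sig_{\Om,M}}R(B)=\inf_{u\in\SSS_{\Om,M}}F(u)$, and the right-hand side equals $\phi(\Om,M)$ by Theorem \ref{t1}. So everything reduces to proving $\inf_{u\in\UUU_{\Om,M}}F(u)=\phi(\Om,M)$. The upper bound here is immediate: since $\SSS_{\Om,M}\subset\UUU_{\Om,M}$ and the two definitions of $F$ agree on $\SSS_{\Om,M}$ (as shown in the text before Problem \ref{p U}), one has $\inf_{\UUU}F\le\inf_{\SSS}F=\phi(\Om,M)$ by Theorem \ref{t1}.

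The heart of the matter is the matching lower bound $F(u)\ge\phi(\Om,M)$ for every $u\in\UUU_{\Om,M}$, now with arbitrarily many reflections allowed. I would obtain it by re-establishing the \emph{pointwise} estimate $v_3^+(x;u)\ge -M/\sqrt{M^2+d^2(x)}$ for a.e.\ $x\in\Om$, which, substituted into (\ref{o Res2}), yields $F(u)\ge\phi(\Om,M)$ exactly as in the single-impact computation reproduced before Theorem \ref{t1}. The crucial new ingredient is a monotonicity property of the vertical velocity along a billiard trajectory over a subgraph. Consider a particle entering at $(x,+\infty)$ with velocity $(0,0,-1)$, so that its first reflection occurs at $(x,u(x))$ with $u(x)\le M$, the horizontal coordinate still being $x$. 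At any reflection off the graph of $u$ at a point with gradient $p$, the inner normal is $n=(p,-1)$; since the particle arrives from the region $\{z>u\}$ the incoming velocity $w$ satisfies $\langle w,n\rangle>0$, and the reflection law gives $w_3'=w_3+2\langle w,n\rangle/|n|^2>w_3$. Hence $v_3$ is nondecreasing along the entire trajectory.

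I would then split on the sign of $v_3^+$. If $v_3^+\ge0$, the integrand of (\ref{o Res2}) is at least $1/2$, which already dominates the integrand of (\ref{o_phi}). If $v_3^+<0$, monotonicity forces $v_3(t)<0$ for all $t$, so the height $z(t)$ strictly decreases from $u(x)$ down to $0$, crossing $z=0$ exactly once at a point $(x',0)$; moreover $|v_3(t)|\ge|v_3^+|$ and $|v_\perp(t)|=\sqrt{1-v_3^2(t)}\le|v_\perp^+|$ throughout. Because $\bar\Om\times\{0\}\subset B_u$, the descending particle cannot cross $z=0$ over $\bar\Om$, so $x'\notin\Om$ and $|x-x'|\ge d(x)$. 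Writing $T$ for the time from first impact to the crossing, integration of the velocity gives $d(x)\le|x-x'|\le\int_0^T|v_\perp|\,dt\le|v_\perp^+|\,T$ and $M\ge u(x)=\int_0^T|v_3|\,dt\ge|v_3^+|\,T$; eliminating $T$ yields $d(x)\,|v_3^+|\le M\,|v_\perp^+|=M\sqrt{1-(v_3^+)^2}$, which rearranges to precisely $v_3^+\ge -M/\sqrt{M^2+d^2(x)}$.

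The step I expect to require the most care is the case $v_3^+<0$. I must verify that the nondecreasing nature of $v_3$ genuinely prevents the particle from climbing back above height $M$ and reflecting there, that the single crossing of $z=0$ is the final exit and lies outside $\Om$ (here the inclusion of the base $\bar\Om\times\{0\}$ in $B_u$ is essential, as it blocks any interior crossing and simultaneously disposes of the borderline case $u(x')=0$), and that reflections on the lateral boundary $\partial\Om\times[0,M]$ leave $v_3$ and $|v_\perp|$ unchanged and are therefore harmless. Since part (b) of Definition \ref{o_U} guarantees that $v^+(\cdot;u)$ is defined a.e., these pointwise estimates integrate to $F(u)\ge\phi(\Om,M)$, which closes the lower bound and completes the proof.
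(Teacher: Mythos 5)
Your proof is correct and follows essentially the same route as the paper: the same reduction of the second equality via Proposition \ref{propo} and Theorem \ref{t1}, the same upper bound from $\SSS_{\Om,M}\subset\UUU_{\Om,M}$, the same key monotonicity of $v_3$ under reflections (strict increase off the graph, unchanged at vertical walls), and the same pointwise bound $v_3^+(x;u)\ge -M/\sqrt{M^2+d^2(x)}$ integrated against (\ref{o Res2}). The only difference is cosmetic: you parametrize by time and eliminate $T$ from $d(x)\le |v_\perp^+|\,T$ and $M\ge |v_3^+|\,T$, whereas the paper parametrizes by the path length $s$ of the horizontal projection and uses convexity of $z(s)$; the two derivations of the pointwise estimate are equivalent.
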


\begin{theorem}\label{t3}
\beq\label{ft3}
\inf_{B \in \BBB_{\Om,M}} R(B) = 0 =\inf_{B \in \DDD_{\Om,M}} R(B).
\eeq
\end{theorem}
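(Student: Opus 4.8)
The plan is to prove both equalities at once. Since $\DDD_{\Om,M}\subset\BBB_{\Om,M}$ and, for any body with regular scattering, $v_3^+(x;B)\ge -1$ forces the integrand in (\ref{o Res3}) to be nonnegative, one has $R(B)\ge 0$, and the inclusion gives $\inf_{\BBB_{\Om,M}}R\le\inf_{\DDD_{\Om,M}}R$ with both infima $\ge 0$. Hence it suffices to exhibit a sequence of connected bodies $B_n\in\DDD_{\Om,M}$ with $R(B_n)\to 0$: this yields $\inf_{\DDD_{\Om,M}}R=0$ and, a fortiori, $\inf_{\BBB_{\Om,M}}R=0$. (For the $\BBB$ statement in isolation one could instead adapt the multiple-reflection construction of \cite{ebook}, but the double-impact construction below settles both.)

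The basic gadget is a two-reflection ``periscope''. The composition of the reflections in two parallel planes leaves every velocity direction unchanged; in particular a particle arriving with $v=(0,0,-1)$, after reflecting off a small upper facet and then off a parallel lower facet, again moves with velocity $(0,0,-1)$, merely shifted horizontally. I would place the upper facet high in the slab, let the particle cross $\Om$ along a long, almost horizontal free flight, and set the lower facet against $\pl\Om$ (tilted by an arbitrarily small amount) so that the particle leaves moving nearly straight down just outside $\bar\Om$; being outside $\bar\Om$ it then falls freely to $z\to-\infty$ with no further reflection, so its contribution $\tfrac12\bigl(1+v_3^+\bigr)$ to $R$ is $O(\ve)$. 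That the exit must be both nearly vertical and outside $\bar\Om$ is structural: the plate $\bar\Om\times\{0\}\subset B$ blocks any downward escape inside $\Om$, while DIC forbids the third impact that a re-descent inside $\Om$ would produce. This is exactly why the effect is impossible under SIC and is responsible for the strictly positive bound $\phi(\Om,M)$ there.

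To cover almost all of $\Om$ I would deploy a whole family of such periscopes, steering the incident particles toward $\pl\Om$ and out, in the spirit of Besicovitch's solution of the Kakeya problem. The facets are packed inside $\bar\Om\times[0,M]$ and joined to the plate $\bar\Om\times\{0\}$, so that the body is connected and contains the plate; they are arranged so that almost every vertical line first meets an entry facet, whose particle then undergoes exactly one further reflection near $\pl\Om$ and escapes. The exceptional set of mishandled particles (edge effects, overlaps, the thin exit ramps near $\pl\Om$) is kept of measure tending to $0$, and the scattering stays regular. Letting the geometric parameters degenerate, $v_3^+(x;B_n)\to -1$ off a set of vanishing measure, whence $R(B_n)\to 0$.

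The main obstacle is the packing itself, under the height constraint $M$. A single planar mirror that turns the vertical flow sideways must be tilted near $\pi/4$, so its height is comparable to its horizontal extent and exceeds $M$ as soon as $\mathrm{diam}\,\Om>M$; one therefore cannot simply tilt one large mirror. Within a two-reflection budget this has to be circumvented by folding the redirection into the slab—small entry facets high up, long nearly horizontal free flights, and a flick to the vertical at the boundary—while simultaneously (i) preventing distinct trajectories from colliding, which would create forbidden third impacts, (ii) keeping the union of facets and plate connected, and (iii) keeping the exceptional set small. Carrying out this Besicovitch-type arrangement, and verifying the at-most-two-reflection property for almost every incident particle, is the delicate heart of the proof; the remaining estimation of the resistance integral is routine bookkeeping.
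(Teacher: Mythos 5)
Your opening reduction is exactly the paper's: since $\DDD_{\Om,M}\subset\BBB_{\Om,M}$ and $R\ge 0$ always, it suffices to produce connected bodies satisfying DIC with resistance tending to zero. But the gadget you propose to build them from cannot work, and the step you yourself call ``the delicate heart of the proof'' is not merely delicate for that gadget --- it is obstructed. A periscope made of two \emph{parallel flat} mirrors preserves the cross-sectional area of every beam it transports, so the exit beams carry the same total cross-section as the entry beams, i.e.\ essentially $|\Om|$. Every exit beam must leave nearly vertically downward and yet never strike the plate $\bar\Om\times\{0\}$, hence must cross the lateral cylinder $\pl\Om\times[0,M]$ before reaching $z=0$; a beam leaving at angle $\theta$ from the vertical, from height at most $M$, can do this only if its exit facet lies within horizontal distance $M\tan\theta$ of $\pl\Om$. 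So exit hardware of total cross-section $\approx|\Om|$ must be packed into the shell $\NNN_{M\tan\theta}(\pl\Om)\times[0,M]$ in such a way that no facet intrudes into another facet's outgoing beam (a forbidden third impact). Since parallel flat mirrors never compress a beam, exit beams of equal direction may not overlap (the lower facet would block the upper beam), and a direct packing count --- facets stacked over the same spot must be vertically spaced by at least (beam width)$/\tan\theta$ --- shows that such a shell can discharge a flux of order at most $|\pl\Om|\,M\tan\theta$. To discharge $\approx|\Om|$ one is forced to take $\tan\theta\gtrsim|\Om|/(|\pl\Om|\,M)$, a constant depending only on $\Om$ and $M$, and then every handled particle contributes $\tfrac12(1-\cos\theta)\ge c(\Om,M)>0$ to the resistance. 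Thus with parallel-mirror periscopes $R(B_n)$ stays bounded away from zero, no matter how cleverly the facets are arranged; in addition, your long horizontal intermediate flights would have to thread through the forest of entry facets covering almost all of $\Om$, a second blocking problem your sketch does not address.

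The paper escapes exactly this obstruction by using \emph{focusing} at both reflections --- a confocal pair of paraboloids instead of a pair of parallel planes. Stage one is the construction already built for Theorem \ref{t1} (Lemma \ref{l_4}), placed on the shrunken domain $\tilde\Om=\Om\setminus\NNN_\ve(\pl\Om)$ at height $M/2$: its elementary mirrors are pieces of paraboloids, so the entire pencil of particles reflected by one mirror passes through a single focus $(O_i,M/2)$, with $O_i$ lying in the thin strip $\Om\cap\NNN_\ve(\pl\Om)$; the reflected pencils travel inside the mirror-free valleys, which disposes of the intermediate blocking problem. Stage two is a thin paraboloid of rotation $P_i$, hollowed out of the slab $\bar\Om\times[0,M/2]$, whose focus is that same point and whose axis $l_i$ points steeply down toward a ball $\UUU_i\subset\RRR^2\setminus\Om$: the second reflection re-collimates the whole pencil through the focus into a beam parallel to $l_i$, so $v_3^+<-\frac{M/2}{\sqrt{M^2/4+\ve^2}}\to-1$. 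The crucial point is that this exit beam is only as wide as the paraboloid, which may be taken arbitrarily thin; hence the whole exit apparatus fits inside the vanishing-area strip near $\pl\Om$, and the particles falling on that strip are simply sacrificed (reflected straight up by the shields $B_3$, $B_4$), costing only $O(\ve)$. This compression of an $O(|\Om|)$ entry flux into exit beams of arbitrarily small total width is precisely what parallel mirrors cannot achieve, and it is the missing idea in your proposal.
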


These results are counterintuitive. Indeed, the statement of Theorem \ref{t1} implies that the graph of a nearly optimal function looks like a plateau with height $M$. The plateau surface is complicated and greatly inclined, with the angle of inclination being typically greater than $45^0$. The plateau is crossed with a huge number of narrow deep valleys, but the total area covered by the valleys is small. The reflected particles further move along the valleys, and their density in the valleys is very high.

Theorem \ref{t1} provides the answer for the class of bodies such that (a) {\it the front part of the body surface is the graph of a function} and (b) {\it the single impact condition is satisfied}. If only one of the conditions (a) and (b) is removed (and thus a larger class of bodies is considered), the infimum remains the same, as indicates Theorem \ref{t2}. However, if both the conditions are removed, the infimum becomes zero. It remains zero even if only single and double reflections are allowed. This is the claim of Theorem \ref{t3}.

Note that a similar problem concerning minimization of specific resistance in hollows was considered in \cite{hollows}.
\vspace{2mm}

{\bf 1.3.}
It is interesting to compare the minimizers in the four main classes of functions studied so far. We take the unit disc $\Om = \Om_0$ and consider the functions $u : \bar{\Om}_0 \to \RRR, \ 0 \le u \le M$ satisfying SIC, with the following additional conditions imposed:

(P$_{SC}$) $u$ is radially symmetric and concave (the case considered by Newton \cite{N});

(P$_{C}$) $u$ is concave \cite{BrFK,BFK,BK,LO};

(P$_{S}$) $u$ is radially symmetric \cite{BK,CL1};

(P) no additional conditions on $u$ (the present paper).
\vspace{1mm}

The minimizer exists in the classes (P$_{SC}$), (P$_{C}$), (P$_{S}$) and does not exist in the class (P).

The optimal shapes in the classes (P$_{SC}$), (P$_{C}$), and (P$_{S}$) with $M = 1$ are depicted in Figs.~\ref{fig4min}\,(a)--(c). Figure~\ref{fig4min}\,(c) is borrowed from \cite{W}. Several trajectories of flow particles are also shown there.

Nearly optimal shapes in the class (P) are extremely complicated and not easy to depict. In Fig.~\ref{fig4min}\,(d) a very schematic representation of a central vertical cross section of such a shape is given, also with $M = 1$. The particles shown in the figure after the reflection leave the plane of cross section and then move along narrow valleys (which are not shown); therefore their trajectories after the reflection are shown dashed.

\begin{figure}[h]
\centering
\includegraphics[scale=0.3]{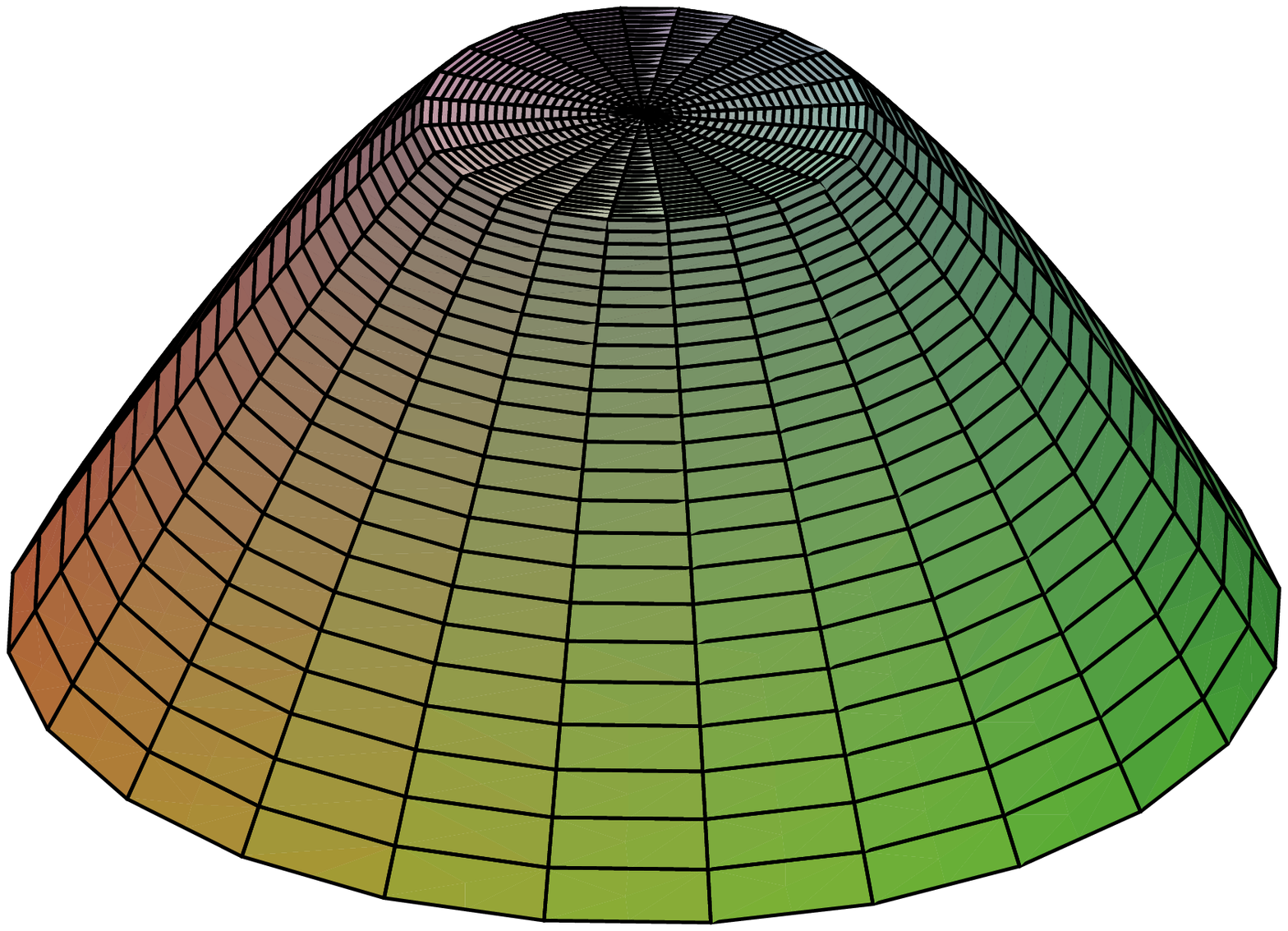}\qquad
\includegraphics[scale=0.31]{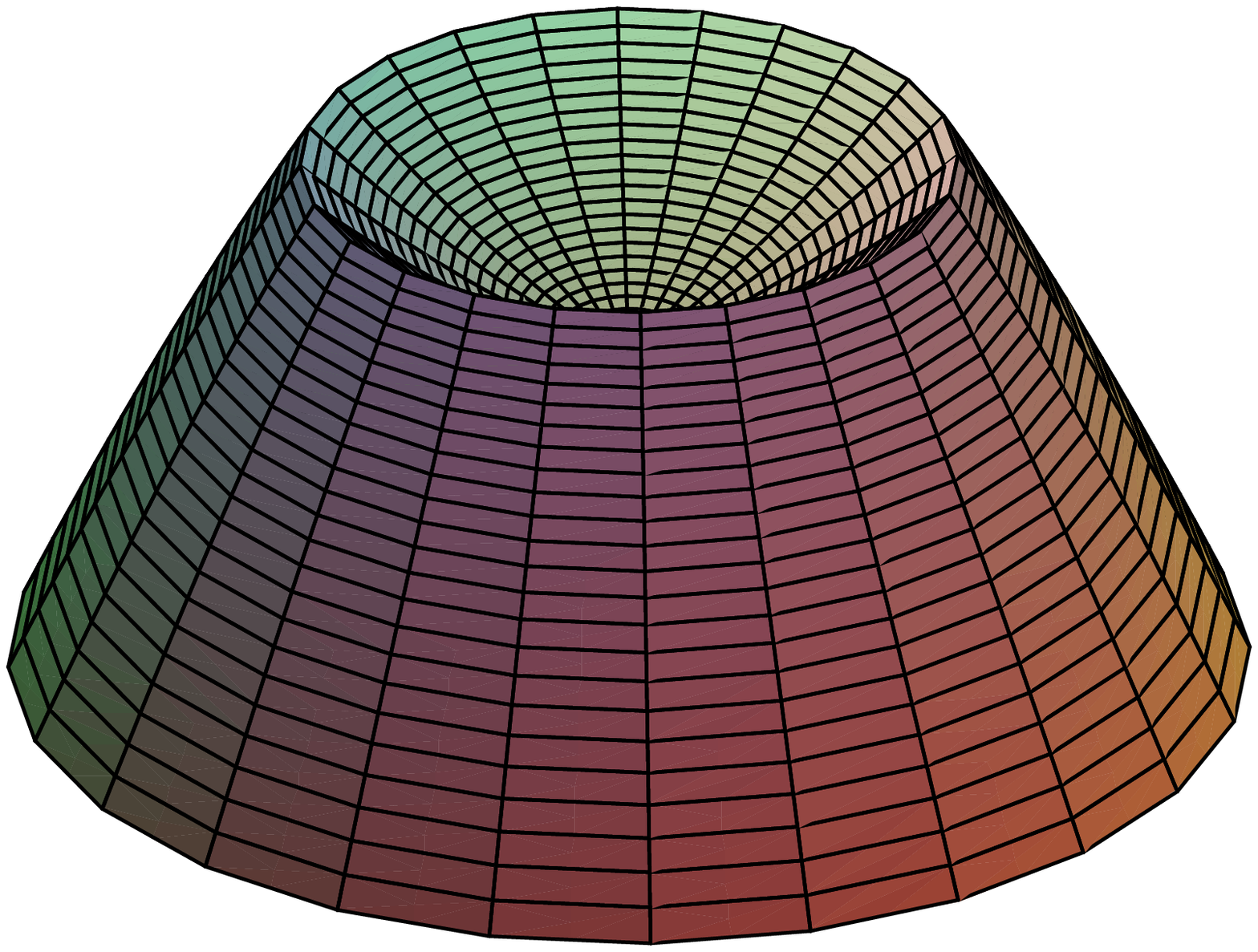}
\end{figure}

\vspace*{43mm}

\begin{figure}[h]
\rput(4.4,2.4){\includegraphics[scale=0.67]{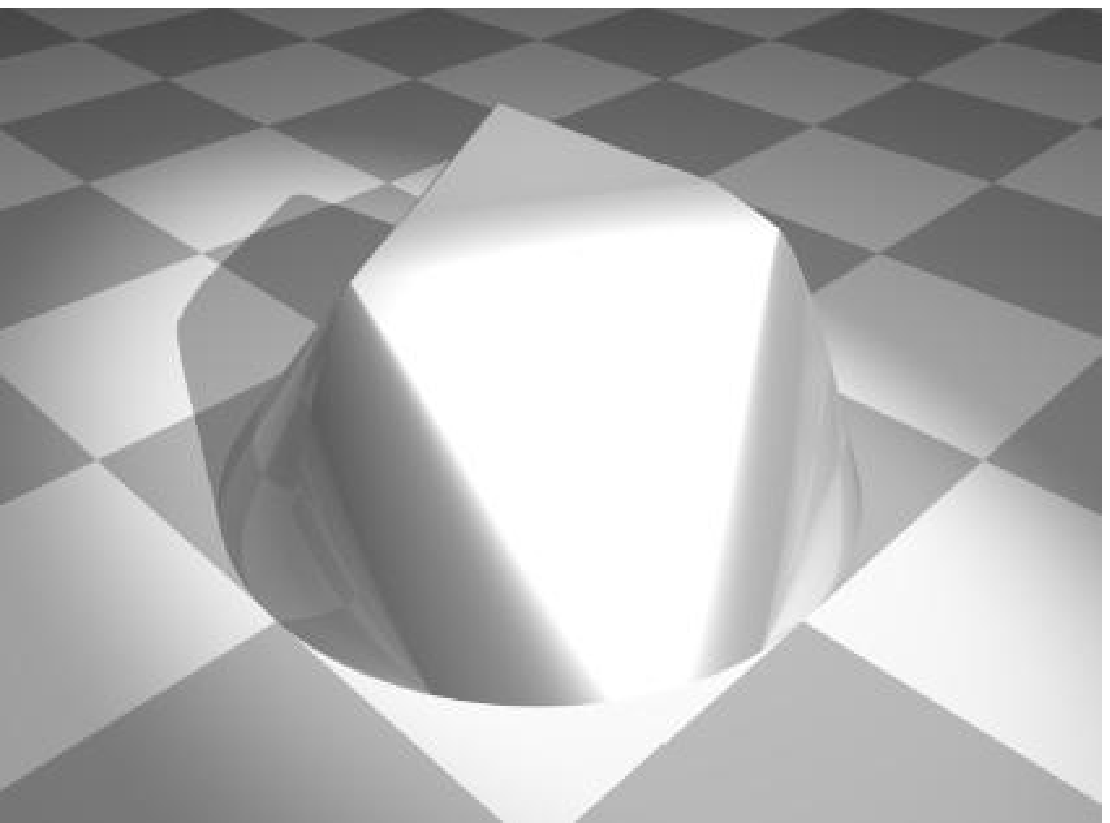}}
\rput(9.8,5){\includegraphics[scale=0.42]{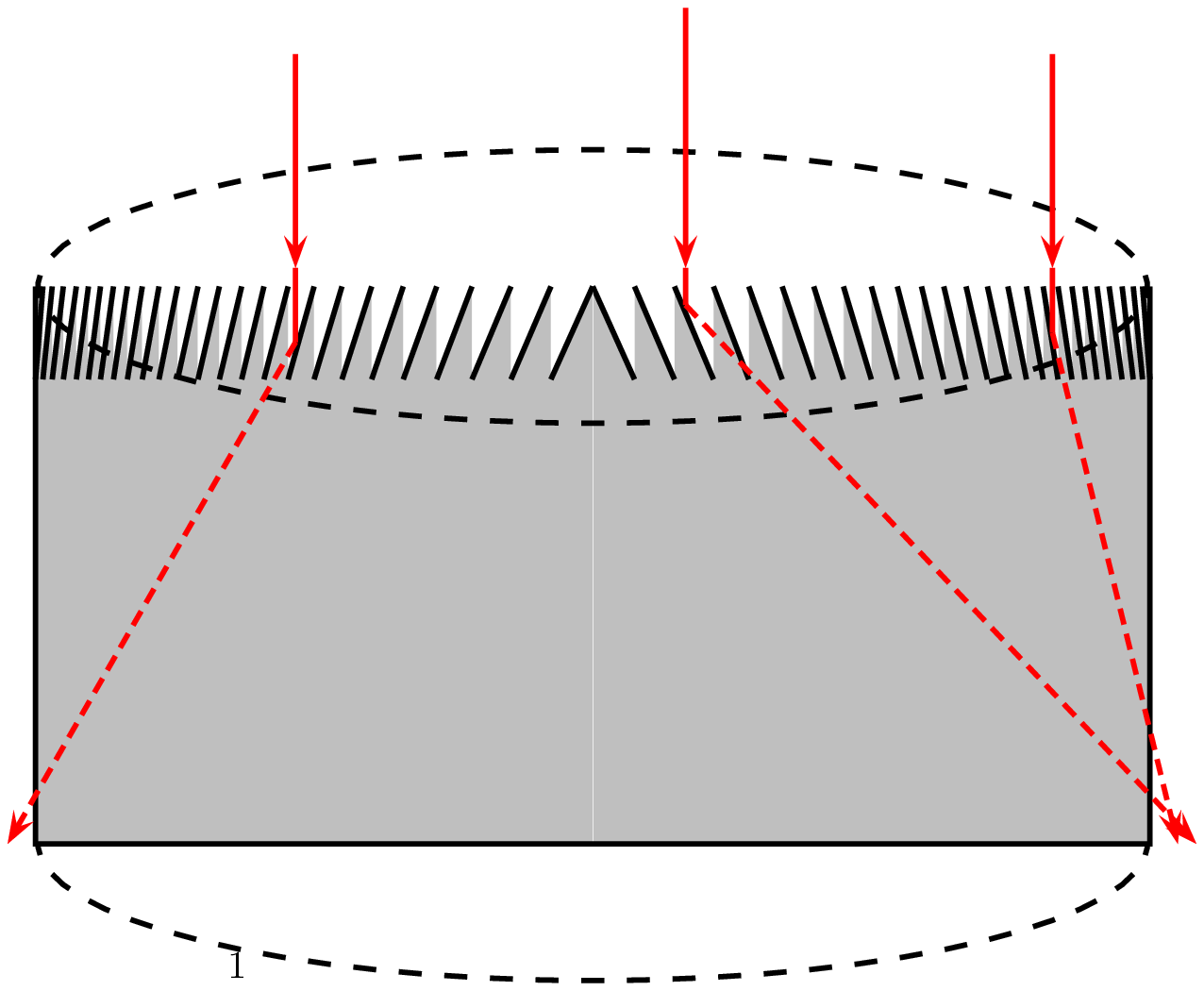}}
\caption{Optimal shapes for Problems (P$_{SC}$), (P$_{C}$), and (P$_{S}$) are shown in Figs.~(a)--(c). A schematic representation of a central vertical cross section of a nearly optimal body for Problem (P) is given in Fig.~(d).}
\label{fig4min}
\end{figure}

\rput(0,2.46){\pspolygon[linecolor=white,fillcolor=white,fillstyle=solid](-0.3,0)(7.8,0)(7.8,0.5)(-0.3,0.5)}
\rput(0,7.63){\pspolygon[linecolor=white,fillcolor=white,fillstyle=solid](-0.3,0)(7.8,0)(7.8,0.5)(-0.3,0.5)}
\rput(0.2,2.97){\pspolygon[linecolor=white,fillcolor=white,fillstyle=solid](-0.3,0)(0.5,0)(0.5,5.8)(-0.3,5.8)}
\rput(7.2,2.97){\pspolygon[linecolor=white,fillcolor=white,fillstyle=solid](-0.3,0)(0.5,0)(0.5,5.8)(-0.3,5.8)}

\rput(-1.2,0){
\psline[linewidth=0.8pt,linecolor=red,arrows=->,arrowscale=1.5](3.8,13.5)(3.8,12)
\psline[linewidth=0.8pt,linecolor=red,arrows=->,arrowscale=1.5](4,12)(4,13.5)
\psline[linewidth=0.8pt,linecolor=red,arrows=->,arrowscale=1.5](2.5,12.6)(2.5,11.35)
\psline[linewidth=0.8pt,linecolor=red,arrows=->,arrowscale=1.5](2.5,11.35)(2.5,11.3)(1.2,11.1)
\psline[linewidth=0.8pt,linecolor=red,arrows=->,arrowscale=1.5](9.8,13.5)(9.8,11.75)
\psline[linewidth=0.8pt,linecolor=red,arrows=->,arrowscale=1.5](9.8,11.75)(9.8,11.7)(13,12.2)
\psline[linewidth=0.8pt,linecolor=red,arrows=->,arrowscale=1.5](9,13)(9,11)(8,10.3)
}

\rput(0.1,9.3){(a)}
\rput(7.1,9.3){(b)}
\rput(0.2,4){(c)}
\rput(7.8,4){(d)}

The value of a nearly optimal function $u(x)$ in Problem (P) is typically close to the maximum value $M$, and $\nabla u(x)$ is typically close to
$$
-\left( \frac{M}{1 - |x|} + \sqrt{1 + \frac{M^2}{(1 - |x|)^2}} \right) \frac{x}{|x|}.
$$
On a subset of $\Om_0$ with small area we have $u(x) = 0$, and in this case $\nabla u(x) = 0$. The infimum of resistance is given by formula (\ref{o_phi}), which in our case takes the form
\beq\label{infP}
\phi(\Om_0,M) = \pi \int_0^1 \Big( 1 - \frac{M}{\sqrt{M^2 + (1 - r)^2}} \Big)\, r\, dr.
\eeq

In the following table the values of minimal resistance are provided for Problems (P$_{SC}$), (P$_{C}$), and (P) and the values $M = 0.4,\, 0.7,\, 1,\, 1.5$. The data for the first two problems are taken from \cite{LO}, and for the last one are calculated by formula (\ref{infP}).

\begin{center}
\begin{tabular}{|c|c|c|c|}
M & P$_{SC}$ & P$_{C}$ & P \\ \hline
1.5 & 0.75 & 0.70 & 0.05 \\
1 & 1.18 & 1.14 & 0.10 \\
0.7 & 1.57 & 1.55 & 0.18 \\
0.4 & 2.11 & 2.11 & 0.35 \\
\end{tabular}
\end{center}
Note that the values for Problem (P$_{C}$) were calculates with more precision in the recent paper \cite{W}. I could not find in the literature numerical values of minimal resistance concerning Problem (P$_{S}$).

As $M \to 0$, the infimum of resistance goes to $\pi$ in Problems (P$_{SC}$) and (P$_{C}$), and to $\pi/2$ in Problem (P). That is, the gain in our case is twofold.

As $M \to \infty$, the infimum in Problem (P$_{SC}$) is $\frac{27}{32} \pi M^{-2} (1 + o(1))$, and in (P) is $\frac{1}{24} \pi M^{-2} (1 + o(1))$, that is, the gain in our case is more than twentyfold, as compared with Newton's case.

This improvement seems fantastic, but it is achieved at the expense of huge complication of optimal shapes. Our method does not allow to design applicable shapes, and in this sense it merely provides a result of existence. Even to get shapes with the resistance equal to or smaller than the minimal resistance in Newton's case, one needs to use details of the construction much smaller than the size of atoms.

The plan of the rest of the paper is the following. The proof of Theorem \ref{t1} is given in Section 2, and the proofs of Theorems \ref{t2} and \ref{t3} are given in Section 3.

\section{Proof of Theorem \ref{t1}}

\subsection{The basic construction}

Take a trapezoid $AMNB$ and assume that $|MN| < |AB|$. Let $O$ be the point of intersection of the lines $AM$ and $BN$ (see Fig. \ref{fig1}).
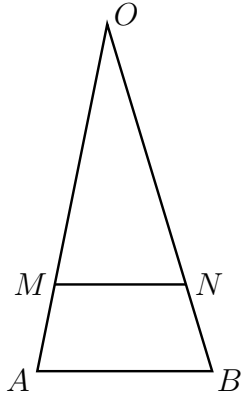
\begin{figure}[h]
\begin{picture}(0,145)
\rput(5.67,0.2){
\scalebox{1.15}{
\pspolygon(0,0)(2,0)(0.8,4)
\psline(0.2,1)(1.7,1)
\rput(-0.22,-0.1){\scalebox{0.87}{$A$}}
\rput(2.2,-0.1){\scalebox{0.87}{$B$}}
\rput(1.02,4.12){\scalebox{0.87}{$O$}}
\rput(-0.08,1){\scalebox{0.87}{$M$}}
\rput(1.97,1){\scalebox{0.87}{$N$}}
}
}
\end{picture}
\caption{An elementary pair.}
\label{fig1}
\end{figure}

Let $d = \max \{ |OA|,\, |OB| \}$,\, $d_0 =$ dist$(O,\, \trap AMNB)$,\, $h > 0$, and let $p$ be the (unique) positive value satisfying
$$
h = \frac{d^2 - p^2}{2p}.
$$
It is easy to check that
$$
p = \sqrt{d^2 + h^2} - h.
$$
For any $x \in \RRR^2$ denote by $r(x)$ the distance between $x$ and $O$, and let $\triangle$ be the open triangle $MON$, and $\trap$ be the open trapezoid $\trap AMNB$.

\begin{definition}\label{o_elem_func}\rm
The trapezoid $\trap$ is called an {\it elementary mirror} and the triangle $\triangle$, the corresponding {\it elementary valley}. The pair $(\triangle,\, \trap)$ is called an {\it elementary pair}, and the point $O$, the {\it focus of this pair}.

The {\it elementary $h$-function} $u = u_{h,{\,\trapp}} : \overline{\triangle AOB} \to \RRR$ corresponding to the elementary pair is defined by
$$
u(x) = \left\{
\begin{array}{ll}
\frac{r^2(x) - p^2}{2p}, & \text{if } x \in \trap;\\
0 & \text{otherwise.}\\
\end{array}
\right.
$$
That is, $u$ equals 0 in the triangle $\triangle$ and on the boundary of the triangle and the trapezoid $\trap$.

The {\it ratio} $\vk = \vk(\trap)$ of the elementary pair is defined by
$$
\vk = \frac{d-d_0}{d}.
$$
\end{definition}

\begin{lemma}\label{l_1}
For $x \in \trap$ one has
\beq\label{ineq1l1}
h - \vk\, (\sqrt{d^2 + h^2} + h) < u(x) < h,
\eeq
\beq\label{ineq2l1}
\frac{1}{2} \left( 1 - \frac{h}{\sqrt{d^2+h^2}} \right) < \frac{1}{1 + |\nabla u(x)|^2} < \frac{(1-\vk)^{-2}}{2} \left( 1 - \frac{h}{\sqrt{d^2+h^2}} \right).
\eeq
\end{lemma}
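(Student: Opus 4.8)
The plan is to observe that on the elementary mirror $\trap$ both $u$ and the integrand $1/(1+|\nabla u(x)|^2)$ depend on $x$ only through the distance $r(x)$ to the focus $O$, so that the whole lemma reduces to elementary one-variable estimates. First I would differentiate: since $u(x) = (r^2(x)-p^2)/(2p)$ with $r^2(x) = |x-O|^2$, one gets $\nabla u(x) = (x-O)/p$, hence $|\nabla u(x)| = r(x)/p$ and
\[
\frac{1}{1+|\nabla u(x)|^2} = \frac{p^2}{p^2 + r^2(x)}.
\]
Thus $u$ is an increasing function of $r(x)$ and $1/(1+|\nabla u|^2)$ a decreasing one, so all four bounds will come from feeding the extreme values of $r(x)$ into these monotone expressions.

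Next I would fix the range of $r(x)$ over the open trapezoid. By the very definition $d_0 = \mathrm{dist}(O,\trap)$, so $r(x) > d_0$ for all $x\in\trap$; and since $r(\cdot)$ is convex, its maximum over the closed trapezoid is attained at a vertex, which is $A$ or $B$ (as $M,N$ lie on the segments $OA,OB$, they are closer to $O$), giving $r(x) < d$. Hence $d_0 < r(x) < d$. I would then record the algebraic identities implied by $p = \sqrt{d^2+h^2}-h$, namely
\[
\sqrt{d^2+h^2}+h = \frac{d^2}{p}, \qquad p^2+d^2 = 2p\sqrt{d^2+h^2}, \qquad d_0 = (1-\vk)\,d,
\]
the last being merely a restatement of the definition of $\vk$.

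With these in place, the two inequalities follow by substitution. For (\ref{ineq2l1}): the lower bound uses $r(x)<d$ and the second identity, which gives $p^2/(p^2+d^2) = \tfrac12\bigl(1 - h/\sqrt{d^2+h^2}\bigr)$; the upper bound uses $r(x)>d_0=(1-\vk)d$ together with the elementary inequality $p^2/(p^2+(1-\vk)^2 d^2) \le (1-\vk)^{-2}\,p^2/(p^2+d^2)$, which after cross-multiplication reduces to $(1-\vk)^2\le 1$. For (\ref{ineq1l1}): monotonicity gives $u(x) < (d^2-p^2)/(2p) = h$ at once, while for the lower bound I would write $u(x) > (d_0^2-p^2)/(2p) = h - d^2\vk(2-\vk)/(2p)$ and then bound $d^2\vk(2-\vk)/(2p) \le \vk\,d^2/p = \vk(\sqrt{d^2+h^2}+h)$, valid because $(2-\vk)/2\le 1$.

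The algebra here is entirely routine. The only step that is not mere computation --- and the one I would state most carefully --- is the geometric claim $d_0 < r(x) < d$, i.e.\ that over the open mirror the distance to the focus stays strictly between its minimum $d_0$, approached on the near edge $MN$, and its maximum $d$, approached at the far vertices $A,B$; once this is in hand, everything else rests only on the trivial bounds $0 \le \vk < 1$.
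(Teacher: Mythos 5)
Your proof is correct and follows essentially the same route as the paper: both reduce everything to the bounds $d_0 < r(x) < d$ on the open trapezoid, the formula $|\nabla u| = r(x)/p$, and the identities $d^2/p = \sqrt{d^2+h^2}+h$ and $p^2+d^2 = 2p\sqrt{d^2+h^2}$, with the same cross-multiplication step $(1-\vk)^2 \le 1$ for the upper bound in (\ref{ineq2l1}). The only difference is cosmetic (you factor $d^2-d_0^2$ as $d^2\vk(2-\vk)$ rather than $(d+d_0)(d-d_0)$, and you spell out the justification of $d_0 < r(x) < d$, which the paper simply asserts).
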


\begin{proof}
Since $d_0 < r(x) < d$, we have
\beq\label{eq_l1}
\frac{d_0^2 - p^2}{2p} < u(x) < \frac{d^2 - p^2}{2p}.
\eeq
The right hand side of (\ref{eq_l1}) equals $h$, and the left hand side equals
$$
\frac{d^2 - p^2}{2p} - \frac{d^2 - d_0^2}{2p} = h - \frac{d+d_0}{2} \cdot \frac{d-d_0}{\sqrt{d^2+h^2} - h}$$
$$
 > h - d\cdot \frac{d\vk}{\sqrt{d^2+h^2} - h} = h - \vk (\sqrt{d^2+h^2} + h).
$$

We have $|\nabla u(x)| = r(x)/p < d/p$, therefore
$$
\frac{1}{1 + |\nabla u(x)|^2} > \frac{p^2}{p^2 + d^2} = \frac{1}{2} \left( 1 - \frac{h}{\sqrt{d^2+h^2}} \right).
$$
On the other hand, $|\nabla u(x)| > d_0/p = (1 - \vk)d/p$, therefore
$$
\frac{1}{1 + |\nabla u(x)|^2} < \frac{p^2}{p^2 + (1 - \vk)^2 d^2} < \frac{(1 - \vk)^{-2} p^2}{p^2 + d^2} = \frac{(1-\vk)^{-2}}{2} \left( 1 - \frac{h}{\sqrt{d^2+h^2}} \right).
$$
Lemma \ref{l_1} is proved.
\end{proof}

By the first inequality in (\ref{ineq1l1}), the function $u_{h,{\,\trapp}}$ is non-negative, if
\beq\label{if}
\vk(\trap) \le \frac{1}{1 + \sqrt{1 + d^2/h^2}}.
\eeq

\begin{lemma}\label{l_2}
A nonnegative elementary $h$-function $u = u_{h,{\,\trapp}}$ satisfies SIC.
\end{lemma}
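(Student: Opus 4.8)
The plan is to use the kinematic reinterpretation of SIC recorded in Remark~\ref{zam 01}: inequality (\ref{o SIC}) holds at a regular point $x$ precisely when the ray the particle follows after reflection at $(x,u(x))$ stays (weakly) above $\mathrm{graph}(u)$. Concretely, writing $y(t)=x-t\nb u(x)$ for the horizontal projection of that ray and $g(t)=u(x)+\tfrac t2\bigl(1-|\nb u(x)|^2\bigr)$ for its height, SIC is exactly the assertion that $u(y(t))\le g(t)$ for every $t>0$ with $y(t)\in\overline{\triangle AOB}$. So the whole lemma reduces to this single pointwise comparison.

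First I would place the focus $O$ at the origin, so that on the mirror $u(x)=\tfrac{1}{2p}(r^2-p^2)$ with $r=|x|$, giving $\nb u(x)=x/p$ and $|\nb u(x)|=r/p$. Substituting into the two formulas above and setting $\lam=1-t/p$, a short computation gives $y(t)=\lam x$ and $g(t)=\lam\,u(x)$; in particular at $t=p$ the ray reaches $(O,0)$, i.e. all reflected rays pass through the focus (the surface is a paraboloid of revolution with focus at $O$). This is the geometric heart of the construction, and the reason the ratio $\vk$ will later control the resistance.

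Next I would pin down the admissible range of $t$. Since $O$ is the apex of $\triangle AOB$ and $y(t)=\lam x$ moves from $x$ straight toward $O$ as $t$ grows, convexity gives $y(t)\in\overline{\triangle AOB}$ exactly for $t\in[0,p]$ (equivalently $\lam\in[0,1]$); for $t>p$ the point leaves the triangle through the vertex $O$. It then remains to check $u(y(t))\le g(t)$ on this range, splitting according to whether $y(t)$ lies in the valley $\triangle$ or in the mirror $\trap$. In the valley $u(y(t))=0$, and since $u(x)\ge0$ (this is where non-negativity of the elementary function is used) and $\lam\ge0$ we have $g(t)=\lam u(x)\ge0$, so the inequality is immediate. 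In the mirror the required inequality $\tfrac{1}{2p}(\lam^2 r^2-p^2)\le\lam\cdot\tfrac{1}{2p}(r^2-p^2)$ is equivalent, after clearing $2p>0$, to $(\lam-1)(\lam r^2+p^2)\le0$, which holds for all $\lam\in[0,1]$ because both factors have the correct sign. The regular points of the valley, where $\nb u=0$ and SIC is trivial, are disposed of separately.

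The step I expect to require the most care is the bookkeeping across the segment $MN$, where the two pieces of the definition of $u$ meet. Because $MN$ is a straight chord rather than the circular arc $r=p$, the function may drop discontinuously from a non-negative value to $0$ as $y(t)$ crosses from $\trap$ into $\triangle$; I must verify that this jump is always downward, so that it can only help the inequality $u(y(t))\le g(t)$. This follows from non-negativity: on the closed mirror one has $r\ge d_0\ge p$, so the mirror value at the crossing point is $\ge0$ while the valley value is $0$. With this observation the two regional estimates splice together and SIC follows for every regular $x$; notably the quantitative bounds of Lemma~\ref{l_1} are not needed here beyond the qualitative fact $d_0\ge p$ guaranteed by the standing assumption (\ref{if}).
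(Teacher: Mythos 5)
Your argument is correct and is essentially the paper's own analytical proof: after placing the origin at the focus, both reduce SIC to checking $u(\lam x)\le \lam\, u(x)$ for $\lam=(p-t)/p\in[0,1]$, split according to whether $\lam x$ lies in the valley (where nonnegativity of $u$ enters, exactly as in the paper's case $-u(x)\le -\tfrac tp u(x)$) or in the mirror (where your $(\lam-1)(\lam r^2+p^2)\le 0$ is the same elementary algebra), and dispose of valley source points trivially. The only cosmetic differences are your substitution $\lam=1-t/p$ with the observation $g(t)=\lam\, u(x)$, and your (harmless, though redundant) discussion of the jump across $MN$; also note that $d_0\ge p$ follows directly from the lemma's hypothesis of nonnegativity, with no need to invoke the sufficient condition (\ref{if}).
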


\begin{proof}
Let us give both geometrical and analytical proofs of the lemma. Geometrically, a particle that initially projects on the trapezoid, is reflected from the graph of $u$ and then goes downward along a line through the point $(O,0)$. The section of the graph of $u$ by the vertical plane containing the trajectory of the particle is shown bold in Fig. \ref{fig2}.
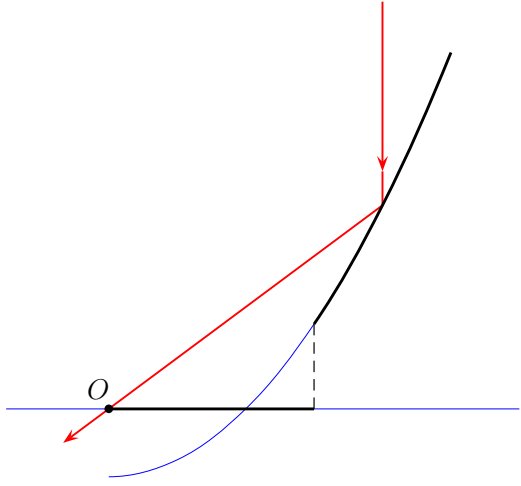
\begin{figure}[h]
\begin{picture}(0,190)
\rput(5,1){
\scalebox{0.9}{
\psline[linecolor=red,arrows=->,arrowscale=1.5](4,6)(4,3.5)
\psline[linecolor=red,arrows=->,arrowscale=1.5](4,3.5)(4,3)(-0.667,-0.5)
\psecurve[linewidth=0.3pt,linecolor=blue](-1,-0.75)(0,-1)(1,-0.75)(2,0)(3,1.25)(4,3)(5,5.25)(6,8)
\psecurve[linewidth=1.2pt](2,0)(3,1.25)(4,3)(5,5.25)(6,8)
\psline[linewidth=0.3pt,linecolor=blue](-1.5,0)(6,0)
\psline[linewidth=1.2pt](0,0)(3,0)
\psline[linewidth=0.5pt,linestyle=dashed](3,0)(3,1.25)
\psdots(0,0)
\rput(-0.15,0.3){$O$}
}
}
\end{picture}
\caption{A vertical section of graph$(u)$ (shown bold) and the trajectory of a particle.}
\label{fig2}
\end{figure}
If, on the other hand, the particle initially projects on the triangle $MON$, it is reflected vertically.

The analytical proof is a little bit more involved. Put the origin at the point $O$; then for $x \in \trap$ one has $u(x) = \frac{1}{2p}(|x|^2 - p^2)$ and $\nabla u(x) = x/p$, and condition SIC (\ref{o SIC}) reads as follows: for any $0 < t \le p$,
\beq\label{eq_l2}
u\Big(\frac{p-t}{p}\,x\Big) - u(x) \le \frac{t}{2}\, \Big(1 - \frac{|x|^2}{p^2} \Big).
\eeq
One has $\frac{p-t}{p}\,x \in \bar{\triangle} \cup \trap$. If $\frac{p-t}{p}\,x \in \bar{\triangle}$, inequality (\ref{eq_l2}) takes the form $-u(x) \le -\frac{t}{p}\, u(x)$, which is obviously true since $t \le p$ and $u(x)> 0$. If $\frac{p-t}{p}\,x \in \trap$, inequality (\ref{eq_l2}) takes the form
$$
\frac{\big(\frac{p-t}{p}\big)^2 |x|^2 - |x|^2}{2p} \le \frac{t}{2} \left( 1 - \frac{|x|^2}{p^2} \right),
$$
which after some algebra reduces to the obvious inequality $(t - p)\, |x|^2/p^3 \le 1$.

If $x \in \triangle$, one has $u(x) = 0, \ \nabla u(x) = 0$, and $u\big(\frac{p-t}{p}\,x\big) = 0$ for all $0 < t \le p$, and condition SIC takes the form $0 \le 1/2$. Lemma \ref{l_2} is proved.
\end{proof}

The resistance of a nonnegative function $u_{h,{\,\trapp}}$ equals
\beq\label{res_elem}
F(u_{h,{\,\trapp}}) = |\triangle| + \int_{\trapp} \, \frac{dx}{1 + |\nabla u(x)|^2};
\eeq
here and below, $|\triangle|$ and $|\trap|$ mean the areas of the triangle $MON$ and the trapezoid $AMNB$, respectively. By the second inequality in (\ref{ineq2l1}), the integrand in the right hand side of (\ref{res_elem}) does not exceed
$\frac 12 (1-\vk)^{-2} \left(1 - {h}/{\sqrt{d^2 + h^2}} \right),$
so the resistance of $u$ is estimated as follows:
$$
F(u_{h,{\,\trapp}}) \le |\triangle| + \frac{(1-\vk)^{-2}}{2} \left(1 - \frac{h}{\sqrt{d^2 + h^2}} \right) \cdot |\trap|.
$$

\begin{lemma}\label{l_3}
Consider a finite collection of elementary pairs ${\triangle_i},{\,\trap_i}$ with poles at $O_i$. Choose positive values $h_i$ so that the elementary $h_i$-functions $u_i = u_{h_i,{\,\trapp_i}} : \overline{\triangle_i \cup \trap_i} \to \RRR$ are nonnegative. Let a convex domain $\Om$ be such that $\Om \subset \cup_i \overline{(\triangle_i \cup \trap_i)}$ and all $O_i$ lie outside $\Om$. Let the function $u : \bar{\Om} \to \RRR$ be defined by $u(x) = \min_i u_i(x)$, where the infimum is taken over those $i$ for which $x \in \overline{\triangle_i \cup \trap_i}$. Then $u$ satisfies SIC.
\end{lemma}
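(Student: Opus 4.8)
The plan is to verify the pointwise inequality (\ref{o SIC}) for $u=\min_i u_i$ directly, reducing it to the single-mirror estimate already established in Lemma \ref{l_2}. Fix a regular point $x\in\Om$ of $u$ and a $t>0$ with $y:=x-t\nabla u(x)\in\bar\Om$, and let $j$ be an index attaining the minimum, so that $x\in\overline{\triangle_j\cup\trap_j}$ and $u(x)=u_j(x)$. The first thing I would record is the identification $\nabla u(x)=\nabla u_j(x)$: since $u\le u_j$ on the common domain with equality at $x$, comparing one-sided difference quotients of $u$ and $u_j$ at the differentiability point $x$ forces the two gradients to agree. Geometrically (cf. Remark \ref{zam 01}) this says that the ray followed by a particle after hitting the graph of $u$ at $x$ coincides with the ray it would follow off the single mirror $u_j$. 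The degenerate case $x\in\triangle_j$ is immediate: there $u_j(x)=0$ is the global minimum of the nonnegative function $u$, so $\nabla u(x)=0$, $y=x$, and (\ref{o SIC}) reads $0\le t/2$.

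The main step is to control where this reflected ray travels. Since $\nabla u_j(x)=(x-O_j)/p_j$, the vector $-\nabla u(x)=-\nabla u_j(x)$ points from $x$ toward the focus $O_j$, and $y=x-t\nabla u(x)=x+\tfrac{t}{p_j}(O_j-x)$ moves along the segment from $x$ to $O_j$ as $t$ grows. The domain $\overline{\triangle_j\cup\trap_j}$ of $u_j$ is the closed convex triangle with apex $O_j$; being convex and containing $x$, it contains the whole segment $[x,O_j]$. Because $\Om$ is convex while $O_j$ lies outside $\bar\Om$, the part of this ray meeting $\bar\Om$ is an initial sub-segment $[x,y^\ast]$ with $y^\ast$ strictly before $O_j$; hence every admissible $y\in\bar\Om$ satisfies $y\in[x,O_j)\subset\overline{\triangle_j\cup\trap_j}$, and its parameter obeys $t<p_j$. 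This is exactly the range in which SIC was verified for the single mirror $u_j$.

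It then remains to chain two inequalities. Since $y$ lies in the domain of $u_j$ and $t<p_j$, Lemma \ref{l_2} (SIC for $u_j$) gives
\beq\label{aux_l3}
u_j(y)=u_j\big(x-t\nabla u_j(x)\big)\le u_j(x)+\frac{t}{2}\big(1-|\nabla u_j(x)|^2\big).
\eeq
On the other hand, as $y\in\bar\Om$ lies in the domain of $u_j$, the index $j$ competes in the minimum defining $u(y)$, so $u(y)\le u_j(y)$. Combining this with (\ref{aux_l3}) and the identities $u_j(x)=u(x)$, $\nabla u_j(x)=\nabla u(x)$ yields $u(y)-u(x)\le\frac{t}{2}(1-|\nabla u(x)|^2)$, which is (\ref{o SIC}) for $u$. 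I expect the only genuine obstacle to be the geometric argument of the second paragraph — ruling out that the reflected trajectory leaves the domain of the active mirror $u_j$ while still lying over $\Om$ — and this is precisely what the two hypotheses (convexity of $\Om$ and all foci $O_i$ outside $\Om$) are there to guarantee; the monotonicity $u\le u_j$ does the rest, since staying above the graph of $u_j$ automatically means staying above the lower envelope, the graph of $u$.
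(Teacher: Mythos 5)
The paper does not actually supply a proof of this lemma (it is declared ``a simple consequence of the definitions'' and left to the reader), so there is nothing to compare against line by line; your argument is a correct and complete realization of the intended one. Its three ingredients --- the envelope identity $\nabla u(x)=\nabla u_j(x)$ at a regular point with minimizing index $j$ (valid because $u(x)>0$ forces $x\in\trap_j$, where $u_j$ is smooth, while the case $u(x)=0$ is trivial since then $x$ is a global minimum of $u\ge 0$, so $\nabla u(x)=0$); the confinement of all admissible points $y=x-t\nabla u(x)\in\bar\Om$ to the segment joining $x$ to the focus $O_j$, hence to the domain of $u_j$, via convexity of $\Om$; and the chain $u(y)\le u_j(y)\le u_j(x)+\frac t2(1-|\nabla u_j(x)|^2)$ from Lemma \ref{l_2} --- are exactly what the hypotheses are designed for. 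One small correction: the lemma only assumes the foci lie outside the \emph{open} set $\Om$, not outside $\bar\Om$, so $O_j$ may sit on $\pl\Om$; in that case the admissible range of parameters is $0<t\le p_j$ rather than $t<p_j$ (your strict statement ``$y^\ast$ strictly before $O_j$'' uses the stronger assumption). This costs nothing: if $x\in\Om$ and $y\in\bar\Om$ then $[x,y)\subset\Om$, so no admissible $y$ can lie strictly beyond $O_j$, and the proof of Lemma \ref{l_2} establishes the inequality for all $0<t\le p_j$; at the endpoint $t=p_j$ one has $y=O_j$, $u_j(y)=0$, and the same chain goes through (with equality). With that one-sentence patch your proof is complete.
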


The proof of Lemma \ref{l_3} is a simple consequence of the definitions and is left to the reader.

The following important lemma will be proved in the next two subsections.

\begin{lemma}\label{l_main}
For any $\ve > 0$ there exist a finite family of elementary pairs $\triangle_i,\, \trap_i$ with ratios $\vk_i$ and with foci at $O_i$ such that

(A) $\Om \subset \cup_i \overline{(\triangle_i \cup \trap_i)}$;

(B) $|\cup_i \triangle_i| < \ve$;

(C) $\vk_i < \ve$;

(D) for each $i$, $O_i \not\in \Om$;

(E) for each $i$ and $x \in \trap_i$,\, $|x - O_i| < \text{dist}(x, \pl\Om) + \ve$.
\end{lemma}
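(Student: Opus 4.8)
The plan is to build the required family in two stages: first a direct, honest covering of $\Om$ by thin sectors that already realizes (A), (C), (D) and (E) but grossly violates (B), and then a Besicovitch-type rearrangement that forces the valleys to overlap so that the area of their union drops below $\ve$ while the mirrors keep covering $\Om$.

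For the first stage I would work locally along $\pl\Om$. Since $\Om$ is convex, each boundary point has a supporting line, and the nearest-point projection $\pi(x)$ onto $\pl\Om$ is defined off a set of measure zero (the medial axis), with $|x-\pi(x)|=\text{dist}(x,\pl\Om)$. Covering $\pl\Om$ by finitely many arcs of diameter $<\delta$ and approximating each arc by its supporting line, I reduce to the model in which $\Om$ lies above a horizontal segment of boundary, so that a point $x=(x_1,x_2)$ has $\text{dist}(x,\pl\Om)=x_2$ and $\pi(x)=(x_1,0)$. Placing a focus $O$ on the boundary directly below $x$ and aiming a thin sector along the inward normal — small angular half-width $\alpha$ and thin radial shell, so that $\vk\approx(1-\rho_0/\rho_1)+\alpha^2/2<\ve$ — gives $|x-O|\approx x_2=\text{dist}(x,\pl\Om)$, so (E) holds with room to spare, while (D) is automatic because $O\in\pl\Om$ and $\Om$ is open. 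Stacking such shells over a range of depths and sliding the foci along the boundary covers $\Om$, yielding (A) and (C). The defect is that the valleys — the inner wedges $\{r<\rho_0\}$ running from the boundary inward — tile the region beneath the mirrors and hence have union of area comparable to $|\Om|$, so (B) fails outright at this stage.

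The second stage is the heart of the matter and the Besicovitch-inspired step. The crucial point is that (E) is only a near-equality: for a mirror point at depth $\rho$ it permits the focus to be displaced horizontally from the foot of $x$ by as much as $\approx\sqrt{2\rho\ve}$ and still satisfy $|x-O|<\text{dist}(x,\pl\Om)+\ve$. This displacement is a genuine degree of freedom — it shears the wedge-shaped valley while leaving the mirror essentially fixed — and I would exploit it exactly as in Perron's treatment of the Kakeya problem: group the sectors covering a given depth shell, bisect repeatedly, and slide the foci of the pieces (each slide staying within the $\sqrt{2\rho\ve}$ budget) so that the sheared valleys are brought into heavy overlap, shrinking the union by the usual Perron-tree estimate while the mirror tips continue to cover the shell. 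Choosing the sectors thin enough, $\alpha\lesssim\sqrt{\ve/\rho}$ so that a valley's width $\alpha\rho$ at the mirror is dominated by the allowed slide $\sqrt{\rho\ve}$, guarantees that the admissible slides are large enough for the reduction to bite. Iterating over the finitely many shells and boundary arcs and then discarding all but finitely many sectors produces a finite family with total valley union $<\ve$, which is (B).

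The hard part will be precisely the tension between (B) and (E): a naive covering makes the valleys fill essentially all of $\Om$, and the only way to shrink their union is to overlap them, yet (E) almost pins each focus to the foot of its mirror and leaves very little room to move. The real work lies in checking that the small but nonzero slack in (E) is quantitatively sufficient to run the Perron-type overlapping argument — that is, in balancing the thinness parameters $\alpha$ and $\rho_0/\rho_1$ and the shell thickness against the slide budget $\sqrt{\rho\ve}$ so that (A), (C), (D) and (E) all survive the rearrangement while (B) is attained. Controlling the boundary curvature uniformly so that the flat-boundary model is a good approximation on each arc, and making sure the finitely many pieces still cover the measure-zero medial axis, are the remaining routine matters.
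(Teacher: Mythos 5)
You have correctly isolated the two load\-/bearing elements: the slack in (E), which lets a focus be displaced laterally by about $\sqrt{2\rho\ve}$ from the foot of a mirror at depth $\rho$, and the need for a Besicovitch-type overlapping of the valleys. But the step where you invoke ``the usual Perron-tree estimate'' is a genuine gap, and it is exactly the point where the paper has to do all of its work. Perron's construction reduces area by \emph{translating} the triangles, so that their bases slide and pile up on one another; in your setting the bases are the mirror edges, and they must stay put to preserve (A). The only freedom left is a shear (fixed base, moving apex), and shears do not give the Perron reduction. Concretely, with your own parameters (half-width $\alpha\lesssim\sqrt{\ve/\rho}$, i.e.\ mirror width $w$ comparable to the slide budget $\beta\rho$ with $\beta=\sqrt{2\ve/\rho}$), \emph{no} choice of the foci can shrink the valley union of one shell: a point at height fraction $t$ above the focal line is covered only by valleys whose base centers lie within $tw/2+(1-t)\beta\rho$ of it, so the covering multiplicity is at most $1+t+2(1-t)\beta\rho/w$, and for $w\ge\beta\rho$ the union of the valleys of a shell of horizontal extent $W$ has area at least
\[
\int_0^1 \frac{tW}{3-t}\,\rho\,dt \;>\; \frac{W\rho}{6},
\]
the same order as the naive tiling. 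The two natural slidings confirm this: bringing all foci of a block to a common point turns the valleys into adjacent sectors of a single wedge (no overlap whatsoever), while a crossing (reversed) arrangement spreads the apexes by exactly as much as it gains in the middle.

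What is actually needed --- and what Section 2.3 of the paper supplies --- is a mechanism in which the valleys become much \emph{longer} than the mirrors and lean sideways by much \emph{more} than the width of the mirror cluster they serve: the $\omom$-doubling procedure grows the valley heights at every step (with $\omom_m=1/m$), so that after $m$ steps a cluster of mirrors of total width $d$ is served by valleys of length $\sim m$ whose apexes spread over a segment of length $\sim md$, and the union of these valleys has area only $\sim d\ln m$ against mirror area $\sim d\sqrt m$. Only this fan structure (bases fixed, apexes receding \emph{and} spreading) makes a fixed-base rearrangement profitable; the gain is logarithmic, and it requires mirror widths like $2^{-m}d$, exponentially small in the construction depth, not merely $w\lesssim\beta\rho$. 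A second point you would also have to face: by Definition \ref{o_elem_func} an elementary trapezoid is radial about its focus, so you cannot ``leave the mirror essentially fixed'' while sliding its focus; after any rearrangement the mirrors of a fan tilt and leave angular gaps, and restoring (A) is what forces the paper's recursive families of the 2nd order, followed by the lattice-square reduction that secures (D) and (E). Your boundary-shell framework could in principle host such an engine, but as written the proposal stops exactly where the real proof begins.
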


The set
$$
V = (\cup_i \triangle_i) \cap \Om
$$
is called {\it the valley} of the family, and its complement $\Om \setminus V,$ {\it the mirror} of the family.

Notice that the family of elementary pairs $\triangle_i = \triangle_i(\ve), \ \trap_i = \trap_i(\ve)$ indicated in this lemma depends on $\ve$.

Let us now derive Theorem \ref{t1} from Lemma \ref{l_main}. Consider the elementary $M$-functions $u_{i,\ve} = u_{M,\,\trapp_i(\ve)} : \overline{\triangle_i \cup \trap_i} \to \RRR$ and define the function $u_\ve : \bar{\Om} \to \RRR$ by $u_\ve(x) = \inf_i u_{i,\ve}(x)$, where the infimum is taken over those $i$ for which $x \in \overline{\triangle_i \cup \trap_i}$.

\begin{lemma}\label{l_4}
For $\ve$ sufficiently small we have $u_{\ve} \in S_{\Om,M}$ and $F(u_{\ve}) < \phi(\Om,M) + O(\ve), \ \ve \to 0.$ Further, there exists a finite set of points $O_i = O_i^\ve \not\in \Om$ in the $\ve$-neighborhood of $\Om$ and a domain $V \subset \Om$ with area $|V| < \ve$ such that each incident particle corresponding to $x \in V$ is reflected vertically, and each particle corresponding to a regular point $x \in \Om \setminus V$ after the reflection passes through one of the points $(O_i, 0)$ on the $x$-plane.
\end{lemma}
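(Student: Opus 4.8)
The plan is to extract the two analytic assertions --- $u_\ve\in\SSS_{\Om,M}$ and $F(u_\ve)<\phi(\Om,M)+O(\ve)$ --- from Lemmas~\ref{l_1} and~\ref{l_3}, and then to read the geometric picture off the focusing property of the elementary mirrors; throughout I would first discard every elementary pair whose closure is disjoint from $\Om$, which changes neither property (A) nor the restriction of $u_\ve$ to $\bar\Om$. To show $u_\ve\in\SSS_{\Om,M}$ I would apply Lemma~\ref{l_3} to the finite family of elementary $M$-functions $u_{i,\ve}=u_{M,\,\trapp_i(\ve)}$, whose three hypotheses are exactly (A) (covering of $\Om$), (D) (foci outside $\Om$), and nonnegativity of every $u_{i,\ve}$. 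By~(\ref{if}) with $h=M$ the last holds once $\vk_i\le\big(1+\sqrt{1+d_i^2/M^2}\big)^{-1}$, so it suffices to bound the $d_i$ uniformly. For a pair whose mirror meets $\Om$ this is immediate: picking $x\in\trap_i\cap\Om$ and using $r_i(x)=|x-O_i|>d_{0,i}=(1-\vk_i)d_i$ together with (E) gives $d_i<(d(x)+\ve)/(1-\vk_i)$, which is bounded since $d(\cdot)$ is bounded on $\Om$ and $\vk_i<\ve$ by (C); the remaining pairs stay in a bounded neighborhood of $\bar\Om$ by the construction, so $\sup_i d_i\le D=D(\Om)$. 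Then the right-hand side of the nonnegativity condition stays above some $c_0>0$, and (C) yields $\vk_i<\ve<c_0$ for $\ve$ small. Lemma~\ref{l_3} now provides SIC for $u_\ve=\min_i u_{i,\ve}$; nonnegativity of $u_\ve$ is automatic and $u_\ve\le M$ follows from the bound $u_{i,\ve}<M$ in~(\ref{ineq1l1}), so $u_\ve\in\SSS_{\Om,M}$.

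For the resistance I would split $\Om=V\cup(\Om\setminus V)$ with $V=(\cup_i\triangle_i)\cap\Om$. On $V$ some $u_{i,\ve}$ vanishes while all are nonnegative, so $u_\ve\equiv0$ there, $\nabla u_\ve=0$ a.e., the integrand equals $1$, and by (B) this contributes $|V|\le|\cup_i\triangle_i|<\ve$. On $\Om\setminus V$, for a.e.\ $x$ the minimizing index $i$ has $x\in\trap_i$ and $\nabla u_\ve(x)=\nabla u_{i,\ve}(x)$. Writing $\psi(s)=\tfrac12\big(1-M/\sqrt{M^2+s^2}\big)$, so that $\phi(\Om,M)=\int_\Om\psi(d(x))\,dx$, the crucial chain is: the right inequality in~(\ref{ineq2l1}) bounds the integrand by $(1-\vk_i)^{-2}\psi(d_i)$; then $r_i(x)>(1-\vk_i)d_i$ and (E) give $d_i<(d(x)+\ve)/(1-\vk_i)$; finally, using $\vk_i<\ve$ (property (C)), the boundedness of $d(\cdot)$ on $\Om$, and the fact that $\psi$ is increasing and Lipschitz on the relevant bounded interval, the product collapses to $\psi(d(x))+O(\ve)$ uniformly in $x$ and $i$. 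Integrating and adding the valley term yields $F(u_\ve)<\phi(\Om,M)+O(\ve)$.

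For the geometric description, the retained foci are the candidate points $O_i$. For an index active on $\Om\setminus V$ choose $x\in\trap_i\cap\Om$; since the ball $B(x,d(x))$ lies in $\Om$ and (E) gives $|x-O_i|<d(x)+\ve$, the focus $O_i$ lies within $\ve$ of $\bar\Om$, while $O_i\notin\Om$ by (D). A particle with footpoint in $V$ meets a surface that is flat at height $0$, so $\nabla u_\ve=0$ and it is reflected vertically. A particle with footpoint at a regular $x\in\Om\setminus V$ meets, in a neighborhood of $x$, exactly the paraboloid $u_{i,\ve}$ whose focus is $(O_i,0)$; by the focusing property already used in the geometric proof of Lemma~\ref{l_2} (see Fig.~\ref{fig2}) the reflected ray passes through $(O_i,0)$, and SIC forbids any further reflection.

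The hard part will be the uniform $O(\ve)$ in the mirror estimate: passing from the $d_i$-dependent bound of Lemma~\ref{l_1} to the $d(x)$-dependent integrand of $\phi$, uniformly over a family that itself varies with $\ve$. This rests on three successive trades --- $d_i$ for $r_i(x)$ via smallness of $\vk_i$ (property (C)), $r_i(x)$ for $d(x)$ via property (E), and $\psi(d_i)$ for $\psi(d(x))$ via Lipschitz monotonicity --- and one must verify that none of the implied constants (the bound $D$ on the $d_i$, the Lipschitz constant of $\psi$ on $[0,D]$) degrade as the family is refined; boundedness of $\Om$ and fixedness of $M$ are exactly what keep them uniform.
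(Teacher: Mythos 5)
Your proposal is correct and follows essentially the same route as the paper: Lemma~\ref{l_3} together with properties (A), (C), (D) and the bound (\ref{if}) gives $u_\ve\in\SSS_{\Om,M}$; property (B) handles the valley term; Lemma~\ref{l_1} plus property (E) bounds the integrand on the mirrors; and (D), (E) plus the paraboloid focusing property give the geometric description of the reflected rays and the location of the $O_i$. Your handling of the mirror estimate via $d_i<(d(x)+\ve)/(1-\vk_i)$ and Lipschitz absorption of the extra factor into $O(\ve)$ is in fact slightly more careful than the paper's displayed chain, which passes directly from $d_i$ to $d(x)+\ve$, but it is the same argument, not a different approach.
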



\begin{proof}
Property (E) implies that $d_i \le \text{diam}(\Om) + \ve$. Taking into account property (C), we conclude that for $\ve$ sufficiently small and arbitrary $i$,
$$
\vk_i < \ve < \frac{1}{1 + \sqrt{1 + d_i^2/M^2}},
$$
and so, inequality (\ref{if}) is satisfied for the function $u_{i,\ve}$. This means that $0 \le u_{i,\ve} \le M$, and by Lemma \ref{l_2}, $u_{i,\ve}$ satisfies SIC. By properties (A) and (D) and Lemma \ref{l_3}, $u_{\ve}$ also satisfies SIC. Therefore $u_\ve \in S_{\Om,M}$.

If $x \in V$, the particle is reflected vertically. If $x$ is a regular point of $\Om \setminus V$ then for some $i$ we have the equality $u_\ve = u_{i,\ve}$ in a neighborhood of $x$, and so, $x \in \trap_i$. The corresponding particle after the reflection passes through the point $(O_i, 0)$ on the $x$-plane. Using (D) and (E) and taking into account that $x \in \trap \cap \Om$, one easily concludes that dist$(O_i, \Om) < \ve.$ Indeed, the segment $[x, O_i]$ contains a point $y \in \pl\Om,$ and since $|x - y| \ge$ dist$(x, \pl\Om)$ and $|x - O_i| <$dist$(x, \pl\Om) + \ve$, we have
$$
\text{dist}(O_i, \Om) \le |O_i - y| = |x - O_i| - |x - y| < \ve.
$$

Further, by Lemma \ref{l_1}
$$
\frac{1}{1 + |\nabla u_\ve(x)|^2} = \frac{1}{1 + |\nabla u_{i,\ve}(x)|^2} < \frac{(1-\vk_i)^{-2}}{2} \left( 1 - \frac{M}{\sqrt{M^2 + d_i^2}} \right)
$$
$$
< \frac{(1-\ve)^{-2}}{2} \left( 1 - \frac{M}{\sqrt{M^2 + (d(x) + \ve)^2}} \right)
$$
(recall that $d(x) = \text{dist}(x,\pl\Om)$). Thus,
$$
F(u_\ve) = \int_\Om \frac{dx}{1 + |\nabla u_\ve(x)|^2} < |V| + \frac{(1-\ve)^{-2}}{2} \int_{\Om \setminus V} \left( 1 - \frac{M}{\sqrt{M^2 + (d(x) + \ve)^2}} \right) dx.
$$
By property (B), $|V| < \ve$, and recalling equation (\ref{o_phi}) we conclude that $F(u) < \phi(\Om,M) + O(\ve)$. Lemma \ref{l_4} is proved.
\end{proof}

The claim of Theorem \ref{t1} is a consequence of Lemma \ref{l_4} and equation (\ref{ineq1}).

\subsection{Proof of  Lemma \ref{l_main}}\label{subs22}

It is assumed that we are given a bounded convex open set $\Om$ and a positive value $\ve$.

Consider a square lattice $\del\ZZZ \times \del\ZZZ$, the size $\del$ of the lattice to be specified below, and take the (closed) squares $Q_1, \ldots, Q_N$ of the lattice that have nonempty intersection with $\Om$. Consider also the lattice $2\del\ZZZ \times 2\del\ZZZ$ with double size and take the squares $\tilde Q_1,\, \tilde Q_2, \ldots$ of this lattice that do not intersect $\Om$, $\tilde Q_i \cap \Om = \emptyset.$ For each square $Q_i$ find the square $\tilde Q_i$ such that the distance between their centers is minimal (and change the numeration of $\tilde Q_i$ if necessary); see Fig.~\ref{fig22}.

\begin{figure}[h]
\begin{picture}(0,145)
\rput(5,1.1){
\scalebox{1}{
\psecurve[fillcolor=lightgray,fillstyle=solid](0,2)(0,0)(3.7,-0.3)(3.7,2.3)(0,2)(0,0)(3.7,-0.3)
   \rput(-0.17,-0.09){
\psline[linewidth=0.6pt](-1,2)(-1,0)
\psline[linewidth=0.4pt,linecolor=blue,linestyle=dashed](-0.5,2)(-0.5,0)
\psline[linewidth=0.6pt](0,3)(0,-1)
\psline[linewidth=0.4pt,linecolor=blue,linestyle=dashed](0.5,3)(0.5,-1)
\psline[linewidth=0.6pt](1,3)(1,-1)
\psline[linewidth=0.4pt,linecolor=blue,linestyle=dashed](1.5,3)(1.5,-1)
\psline[linewidth=0.6pt](2,3)(2,-1)
\psline[linewidth=0.4pt,linecolor=blue,linestyle=dashed](2.5,3)(2.5,-1)
\psline[linewidth=0.6pt](3,3)(3,-1)
\psline[linewidth=0.4pt,linecolor=blue,linestyle=dashed](3.5,3)(3.5,-1)
\psline[linewidth=0.6pt](4,3)(4,-1)
\psline[linewidth=0.4pt,linecolor=blue,linestyle=dashed](4.5,3)(4.5,0)
\psline[linewidth=0.6pt](5,3)(5,0)
\psline[linewidth=0.6pt](0,-1)(4,-1)
\psline[linewidth=0.4pt,linecolor=blue,linestyle=dashed](0,-0.5)(4,-0.5)
\psline[linewidth=0.6pt](-1,0)(5,0)
\psline[linewidth=0.4pt,linecolor=blue,linestyle=dashed](-1,0.5)(5,0.5)
\psline[linewidth=0.6pt](-1,1)(5,1)
\psline[linewidth=0.4pt,linecolor=blue,linestyle=dashed](-1,1.5)(5,1.5)
\psline[linewidth=0.6pt](-1,2)(5,2)
\psline[linewidth=0.4pt,linecolor=blue,linestyle=dashed](0,2.5)(5,2.5)
\psline[linewidth=0.6pt](0,3)(5,3)
\pspolygon[fillstyle=solid,fillcolor=gray](3,2)(2.5,2)(2.5,1.5)(3,1.5)
\pspolygon[fillstyle=solid,fillcolor=gray](3,3)(3,4)(2,4)(2,3)
\psline[arrows=->,arrowscale=2](2.75,1.75)(2.5,3.5)
\pspolygon[fillstyle=solid,fillcolor=gray](1,1)(0.5,1)(0.5,0.5)(1,0.5)
\pspolygon[fillstyle=solid,fillcolor=gray](0,0)(0,-1)(-1,-1)(-1,0)
\psline[arrows=->,arrowscale=2](0.75,0.75)(-0.5,-0.5)
\rput(3.25,1.75){$Q_i$}
\rput(3.3,3.5){$\tilde Q_i$}
\rput(1.25,0.75){$Q_j$}
\rput(-1.3,-0.5){$\tilde Q_j$}
}
}
}
\end{picture}
\caption{Covering of $\Om$ by squares of the lattice $\del\ZZZ \times \del\ZZZ$ and two cases of correspondence between the squares $Q_i$ and $\tilde Q_i$.}
\label{fig22}
\end{figure}
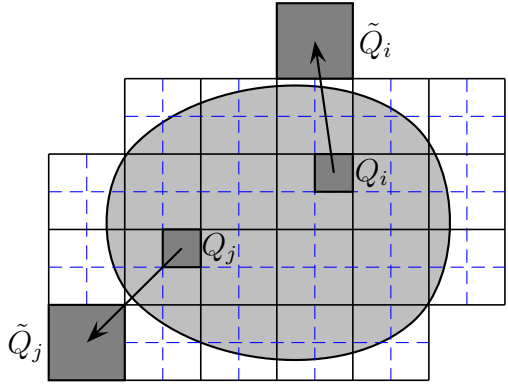

The correspondence $Q_i \mapsto \tilde Q_i$ is not necessarily injective; that is, it may happen that $\tilde Q_i = \tilde Q_j$ for $i \ne j$.

Choose $\del$ small enough so that for each $i$ and for any two points $x \in Q_i, \ \tilde x \in \tilde Q_i$,
\beq\label{ineqf}
|x - \tilde x| < \text{dist}(x,\pl\Om) + \ve/2.
\eeq
It suffices to take, for example, $\del = \ve/10$.

The following lemma will be proved in the next subsection. We use the notation $B_\om(\CCC)$ for the ball with radius $\om$ centered at the point $\CCC$.

\begin{lemma}\label{l2main}
For any triangle $\AAA\BBB\CCC$ there exists a finite family of elementary pairs $\triangle_i^\om,\, \trap_i^\om$ depending on the parameter $\om > 0$ with ratios $\vk_i^\om$ and with foci at $O_i^\om$ (the number of pairs in the family may depend on $\om$) such that

(a) $B_\om(\CCC) \subset \cup_i \overline{(\triangle_i^\om \cup \trap_i^\om)}$;

(b) $|\cup_i \triangle_i^\om|/\om^2 \to 0$ as $\om \to 0$;

(c) $\max_i \vk_i^\om \to 0$ as $\om \to 0$;

(d) the foci $O_i^\om$ belongs to the $\al(\om)$-neighborhood of the segment $\AAA\BBB$, and

(e) $\cup_i \trap_i^\om \subset B_{\al(\om)}(C)$, where $\al(\om) \to 0$ as $\om \to 0$.
\end{lemma}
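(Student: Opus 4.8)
The plan is to adapt Besicovitch's overlapping‑triangles (Perron tree) idea. Fix coordinates so that $\CCC$ is at the top and the segment $\AAA\BBB$ lies below it, and set $H=\text{dist}(\CCC,\AAA\BBB)$. The starting point is a dictionary between elementary pairs and thin triangular sectors: a sector with apex $O$ near $\AAA\BBB$ whose far end reaches up to near $\CCC$ splits canonically into an elementary pair, its outer radial slice (the part near $\CCC$, lying between radii $d_0$ and $d$ from $O$) being the mirror $\trap$, the remaining inner part being the valley $\triangle$, the apex $O$ being the focus, and the ratio being $\vk=(d-d_0)/d$, i.e. the relative radial thickness of the slice. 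Under this dictionary, producing the required family is the same as producing finitely many thin sectors issuing from $\AAA\BBB$ whose outer slices cover $B_\om(\CCC)$ and whose inner cones have total area $o(\om^2)$.

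First I would record the obstruction that forces the use of Besicovitch's idea. If all sectors share one focus $O\in\AAA\BBB$, then to cover the two–dimensional ball $B_\om(\CCC)$ their common angular fan must subtend an angle of order $\om/H$ and reach out to radius $\approx H$; the valley triangles then fill (and nest inside) a single cone of area of order $\om H$, which is $\gg\om^2$. The same order‑$\om H$ bound arises for any arrangement in which the valley cones of different foci do not overlap. Hence the smallness required in (b), namely $o(\om^2)$, can only be achieved by making the long thin valley cones issuing from \emph{different} foci overlap — which is precisely the Perron‑tree phenomenon.

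The core step is therefore the following. Starting from a covering of $B_\om(\CCC)$ by thin sectors, I would subdivide them by cevians into a large number of still thinner sub‑sectors and translate the sub‑sectors \emph{parallel to} $\AAA\BBB$ so that neighbouring (nearly parallel) valley cones overlap; iterating this bisection‑and‑translation reduces the area of the union of valleys by a factor tending to $0$ as the number of steps grows. Since the lemma permits the number of pairs to depend on $\om$, I would choose the depth of the construction, as a function of $\om$, large enough that the valley union is $o(\om^2)$, giving (b). A translation is an isometry, so it leaves each $d_i,d_0^{\,i}$ and hence each ratio $\vk_i$ unchanged; taking the outer slices radially thin makes $\vk_i\to0$, giving (c). Because the apices start on $\AAA\BBB$ and move parallel to it, they remain in an $\al(\om)$‑neighbourhood of the segment, giving (d); the outer slices sit at radius $\approx H$ inside a thin band around $\CCC$, so they lie in $B_{\al(\om)}(\CCC)$, giving (e). Finally the non‑negativity of each elementary $h$‑function follows from the smallness of the ratios via criterion (\ref{if}), and the quantitative control of the integrand rests on Lemma \ref{l_1}.

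The hard part, and where I expect almost all of the work to lie, is reconciling two opposing requirements in a single construction. The translations that collapse the area of the valley union necessarily displace the outer slices, yet property (a) demands that these slices still cover the \emph{entire} ball $B_\om(\CCC)$. The main obstacle will be to design the sliding pattern — and to keep track of the resulting positions of the outer slices — so that coverage of $B_\om(\CCC)$ survives while the valley union is driven below $\om^2$, that is, so that the order‑$\om H$ barrier is beaten quantitatively and uniformly as $\om\to0$. Once this compatibility is arranged, properties (c)–(e) together with the non‑negativity check reduce to routine bookkeeping.
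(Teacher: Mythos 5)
Your proposal correctly identifies the right circle of ideas (Besicovitch/Kakeya) and, to your credit, also correctly isolates the central difficulty: any operation that shrinks the union of the valleys displaces the mirrors, so coverage of $B_\om(\CCC)$ is endangered. But the proposal stops exactly there --- you write that ``the main obstacle will be to design the sliding pattern \dots so that coverage of $B_\om(\CCC)$ survives,'' and you offer no mechanism for doing this. That unresolved tension is not a technical loose end to be deferred; it is the entire content of the lemma, so the proposal has a genuine gap.

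Moreover, the bisect-and-translate (Perron tree) mechanism you propose is not the tool the paper uses, and it is hard to see how it could work here, since a translation moves a valley and its rigidly attached mirror together. The paper does two things differently. First, instead of translating sub-sectors it uses the ``sprouting'' form of Besicovitch's construction: a valley triangle $MON$ is replaced by two thinner triangles $MM'D$, $NN'D$ obtained by extending the sides \emph{beyond the apex} $O$; iterating this $m$ times (with ratio $1/m$ at step $m$) produces $2^m$ valleys whose union has area only $O(d\ln m)$ because they overlap heavily, whose foci spread along a line (this is what later yields (d)), and --- crucially --- whose attached mirrors of depth $\sqrt m$ remain pairwise \emph{disjoint}. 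Second, and this is the idea your proposal lacks entirely: the mirrors of a single such family are disjoint but sparse, so they come nowhere near covering a ball; coverage is achieved not by tuning a sliding pattern but by a second-order recursion, in which linear copies of the whole first-order set (with the \emph{same} $m$) are inserted into the $2^m-1$ uncovered ``associated angles,'' and this insertion is iterated $\lfloor\sqrt m\rfloor+1$ times. Coverage then propagates by one unit of height per iteration (each copy's valleys cover its own generating triangle --- note that (a) asks the valleys and mirrors \emph{together} to cover, not the mirrors alone) until the full triangle $\Convex$ of height $\sqrt m+1$ is covered, while the total valley area, summed over all copies using the disjointness of all trapezoids inside $2\Convex$ and the per-family bound (valley area)/(mirror area) $\le(\ln m+3/2)/\sqrt m$, stays $O(d\sqrt m\,\ln m)$. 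Setting $m=\lfloor c^2/\om^2\rfloor$ and rescaling by $1/(mr)$ then gives (a)--(e), with the quantity in (b) of order $\om\ln(1/\om)$. Without this two-level hierarchy one runs into exactly the problem described in Remark \ref{zam 8}: a naive self-similar filling forces the complexity parameter to grow along the hierarchy, and the uncovered area cannot be controlled.
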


Let us prove Lemma \ref{l_main} using this lemma.

Fix $i$ and define a triangle $\AAA\BBB\CCC$ so that the translates of $Q_i$ by the vectors $\overrightarrow{\CCC\AAA}$ and $\overrightarrow{\CCC\BBB}$ lie in the interior of $\tilde Q_i$. This is possible, since the size of $Q_i$ is smaller than that of $\tilde Q_i$.

Take an $n \in \mathbb{N}$ and divide $Q_i$ into $n^2$ small squares; let $\CCC_1, \ldots, \CCC_{n^2}$ be their centers (see Fig.~\ref{fig_l5}\,(b)). The size of each square is
\beq\label{oom}
\om = \text{size}(Q_i)/n.
\eeq
Take the family of elementary pairs $\{ \triangle_i^\om,\, \trap_i^\om \}$ defined by Lemma \ref{l2main} for the triangle $\AAA\BBB\CCC$, and for each $k = 1, \ldots, n^2$ consider the  translate of this family by the vector $\overrightarrow{\CCC\CCC_k}$ (see Fig.~\ref{fig_l5}\,(b)). The corresponding translate of the triangle $\AAA\BBB\CCC$ will be denoted by $\AAA_k\BBB_k\CCC_k$, and the union of translates of the family $\{ \triangle_i^\om,\, \trap_i^\om \}$ will be referred to as the {\it big family corresponding to} $Q_i$. We have the following.

\begin{figure}[h]
\begin{picture}(0,145)
\rput(3,1){
\scalebox{1}{
\pscircle[fillstyle=solid,fillcolor=lightgray](-0.5,0){0.3}
\psdots(0,2.5)(-0.5,0)(1,2.5)
\psline[linecolor=blue](0,2.25)(1,2.25)
\psline[linecolor=blue](0,2.75)(1,2.75)
\psarc[linecolor=blue](1,2.5){0.25}{-90}{90}
\psarc[linecolor=blue](0,2.5){0.25}{90}{270}
\psline[linestyle=dashed](0,2.5)(-0.5,0)(1,2.5)
\psline(0,2.5)(1,2.5)
\rput(0.2,0){\scalebox{1}{$\CCC$}}
\rput(1.39,2.82){\scalebox{1}{$\AAA$}}
\rput(-0.35,2.8){\scalebox{1}{$\BBB$}}
\pscircle[linecolor=blue,linestyle=dashed](-0.5,0){0.5}
\rput(-2,-0.9){\scalebox{1}{(a)}}
}
}

\rput(10,1){
\scalebox{0.8}{
\pspolygon(-1,-1)(1,-1)(1,1)(-1,1)
\psline(-0.5,-1)(-0.5,1)
\psline(0,-1)(0,1)
\psline(0.5,-1)(0.5,1)
\psline(-1,-0.5)(1,-0.5)
\psline(-1,0)(1,0)
\psline(-1,0.5)(1,0.5)
\rput(0,0.5){\pspolygon(-1.25,1)(2.25,1)(2.25,4.5)(-1.25,4.5)}
\rput(-1.8,3.1){\scalebox{1.7}{$\tilde Q_i$}}
\rput(-1.55,-0.2){\scalebox{1.45}{$Q_i$}}
\rput(0.18,0.28){
\psline[linecolor=blue,linestyle=dashed](0,2.25)(1,2.25)
\psline[linecolor=blue,linestyle=dashed](0,2.75)(1,2.75)
\psarc[linecolor=blue,linestyle=dashed](1,2.5){0.25}{-90}{90}
\psarc[linecolor=blue,linestyle=dashed](0,2.5){0.25}{90}{270}
\psline[linestyle=dashed](0,2.5)(-0.5,0)(1,2.5)
\psline(0,2.5)(1,2.5)
\psdots(0,2.5)(-0.5,0)(1,2.5)
\rput(0,-0.04){\scalebox{1.12}{$\CCC_k$}}
\rput(1.37,2.82){\scalebox{1.12}{$\AAA_k$}}
\rput(-0.4,2.84){\scalebox{1.12}{$\BBB_k$}}
}
\rput(-2.75,-1){\scalebox{1.25}{(b)}}
}
}
\end{picture}
\caption{(a) The union of elementary sets in the family contains the shaded circle, and all the trapezoids are contained in the circle bounded by a dashed line. The focal set of the family is contained in the neighborhood of $\AAA\BBB$ shown by a dashed line. (b) A pair of squares $Q_i,\, \tilde Q_i$ and a translate of $\triangle \AAA\BBB\CCC$ corresponding to a small square in $Q_i$.}
\label{fig_l5}
\end{figure}
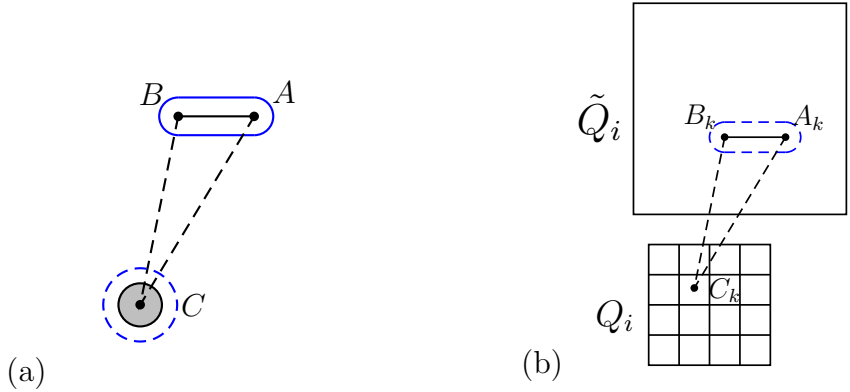

(a) The union of elementary sets in the $k$th family contains the $k$th small square; therefore the union of elementary sets in the big family contains $Q_i.$

(b) By (\ref{oom}), the area of the union of elementary triangles in the big family is not greater than
$$
n^2 |\cup_i \triangle_i^\om| = (\text{side}(Q_i))^2 \frac{1}{\om^2} |\cup_i \triangle_i^\om| \to 0 \quad \text{as} \ \, \om \to 0.
$$
For $n$ sufficiently large it is smaller than $\ve/N$.

(c) For $n$ sufficiently large (and therefore, $\om$ sufficiently small), $\vk_i^\om < \ve.$

(d) For $\om$ sufficiently small, not only the translates of $Q_i$ by $\overrightarrow{\CCC\BBB}$ and $\overrightarrow{\CCC\AAA}$ belong to $\tilde Q_i$, but also their $\al(\om)$-neighborhoods. This implies that the $\al(\om)$-neighborhoods of the segments $\overrightarrow{\AAA_k\BBB_k}, \ k = 1, \ldots, n^2$ also belong to $\tilde Q_i$. Thus, for $n$ sufficiently large (and correspondingly $\om$ sufficiently small), all foci of the big family belong to $\tilde Q_i$, and therefore do not belong to $\Om.$

(e) For $\om$ sufficiently small, $\al(\om) < \ve/4$, and therefore the union of the trapezoids of the big family belongs to the $(\ve/4)$-neighborhood of $Q_i$. Take a point $x$ in a trapezoid of the big family and let $O$ be the corresponding focus. Then there exists a point $x' \in Q_i$ such that $|x-x'| < \ve/4$ and therefore
$$
\text{dist}(x', \pl\Om) < \text{dist}(x, \pl\Om) + \ve/4.
$$
Since (for $\om$ sufficiently small) $O \in \tilde Q_i$, by (\ref{ineqf}) we have
$$
|x' - O| < \text{dist}(x',\pl\Om) + \ve/2,
$$
and thus,
$$
|x - O| < |x - x'| + |x' - O| < \ve/4 + \text{dist}(x', \pl\Om) + \ve/2 < \text{dist}(x, \pl\Om) + \ve.
$$

Take the union of the big families corresponding to all $Q_i, \ i = 1, \ldots, n^2$; it follows from (a)--(e) that it satisfies the conditions (A)--(E). Lemma \ref{l_main} is proved.

\subsection{Proof of Lemma \ref{l2main}}\label{subs23}

The family to be constructed is a two-level hierarchy. Families of the 1st order are constructed by the so-called procedure of $\omom$-doubling of a triangle. This procedure was first proposed in \cite{hollows} to solve a problem of minimal resistance for cavities and is inspired by Besicovitch's method of solving the Kakeya problem \cite{Bes}. For the reader's convenience, the procedure is described here.

Let $0 < \omom < 1$. Take a triangle $MON$ and extend the sides $OM$ and $ON$ beyond the point $O$ to obtain the segments $MM'$ and $NN'$, with
$$
|OM'| = \omom|OM| \quad \text{and} \quad |ON'| = \omom|ON|.
$$
Joining the points $M'$ and $N'$ with the midpoint $D$ of $MN$, we obtain two triangles $MM'D$ and $NN'D$ (see Fig.~\ref{fig_doubling}). The procedure of $\omom$-doubling consists in substituting the original triangle $MON$ with the two triangles $MM'D$ and $NN'D$. If the height of $\triangle MON$ is $h$, then the heights of the new triangles are both equal to $(1 + \omom)h$.

\begin{figure}[h]
\begin{picture}(0,200)
\rput(5.7,-0.6){
\pspolygon(0.2,1)(1,5)(2.6,1)
\psline[linewidth=0.4pt,linecolor=blue,linestyle=dashed](0.2,7)(1.4,7)
  \psline[linewidth=0.8pt](1,5)(1.4,7)(1.4,1)
  \psline[linewidth=0.8pt](1.4,1)(0.2,7)(1,5)
\psline[linewidth=0.4pt,linestyle=dashed,linecolor=blue](0.2,7)(1.4,7)
\psline[linewidth=0.4pt,linecolor=blue,linestyle=dashed](0.2,7)(0.6,3)
\psline[linewidth=0.4pt,linecolor=blue,linestyle=dashed](1.4,7)(1.8,3)
\rput(1.2,5.07){\scalebox{0.82}{$O$}}
\rput(-0.03,1.1){$M$}
\rput(2.8,1.16){$N$}
\rput(-0.07,7){\scalebox{0.9}{$N'$}}
\rput(1.68,7){\scalebox{0.82}{$M'$}}
   \rput(0.25,2.95){\scalebox{0.9}{$N''$}}
   \rput(2.1,3.1){\scalebox{0.82}{$M''$}}
\rput(1.3,0.73){\scalebox{0.9}{$D$}}
   \psline[linewidth=0.4pt,linecolor=blue,linestyle=dashed](1,5)(1.4,1)
}
\end{picture}
\caption{The procedure of doubling: the triangle $MON$ is substituted with the two triangles $MM'D$ and $NN'D$.}
\label{fig_doubling}
\end{figure}
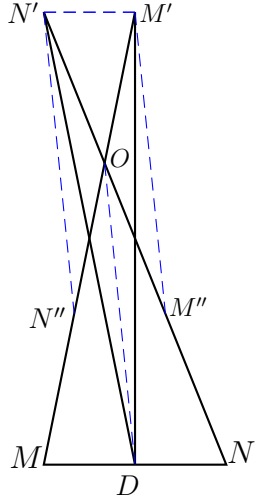

It is easy to estimate the increase of the total area, $|\triangle MM'D \cup \triangle NN'D| - |\triangle MON|$, as a result of doubling. Draw two lines parallel to $OD$ through $M'$ an $N'$, and denote by $M''$ and $N''$ the points of intersection of these lines with $ON$ and $OM$, respectively. We have
$$
|\triangle ON'N''| = |\triangle OM'M''| = \omom^2 |\triangle MON|,
$$
therefore
$$
|\triangle MM'D \cup \triangle NN'D| - |\triangle MON| < |\triangle ON'N'' \cup \triangle OM'M''| = 2\omom^2 |\triangle MON|.
$$

Let us now apply the procedure of doubling successively several times, starting from the triangle $MON$. Assume that the height of $MON$ is 1 and the length of the base is $d$. At the $m$th step, $m = 1,\, 2,\ldots$ we apply $\omom_m$-doubling with $\omom_m = 1/m$. After $m$ steps we get $2^m$ triangles with height $m+1$ and with the length of base $2^{-m} d$.

Let $S_m$ be the area of the union of triangles after the $m$th step. One has $S_0 = |\triangle MON| = d/2$, and the total increase of the area at the $m$th step is smaller than $2^{m-1} \cdot 2\del_m^2 \cdot 2^{-(m-1)} dm/2 = d/m$, therefore
$$
S_m < S_{m-1} + d/m.
$$
One easily concludes by induction that $S_m < d(\ln m + 3/2)$ for $m \ge 1$.

Now fix $l > 0$ and for $m = 1,\, 2,\ldots$ define the family of $2^m$ elementary pairs ${\triangle_i},{\,\trap_i}$, where the triangles $\triangle_i$ coincide with the triangles obtained at the $m$th step of doubling, and all the trapezoids $\trap_i$ have the same height $l$. Each trapezoid of the $(m-1)$th step generates two trapezoids of the $m$th step, which are disjoint and contained in the original one. This implies that for each $m$, the trapezoids of the $m$th step are mutually disjoint. The area of the union of trapezoids is greater than $ld$.

\begin{definition}\label{o_family1step}\rm
A family ${\triangle_i},{\,\trap_i}$ of $2^m$ elementary pairs at the $m$th step with $l = \sqrt m$ is called a {\it family of the 1st order}. The union $\cup_{i=1}^{2^m} \overline{(\triangle_i \cup \trap_i)}$ is called a $(1,m)$-{\it set}.
The (finite) set of foci of the elementary pairs is called the corresponding {\it focal set of the 1st order}.
The original triangle $MON$ is called the {\it generating triangle} for the family of the 1st order.
\end{definition}

Several families of the 1st order with $m = 0,\, 1,\, 2,\, 3,\, 4$ are shown in Fig.~\ref{fig_set1order}.

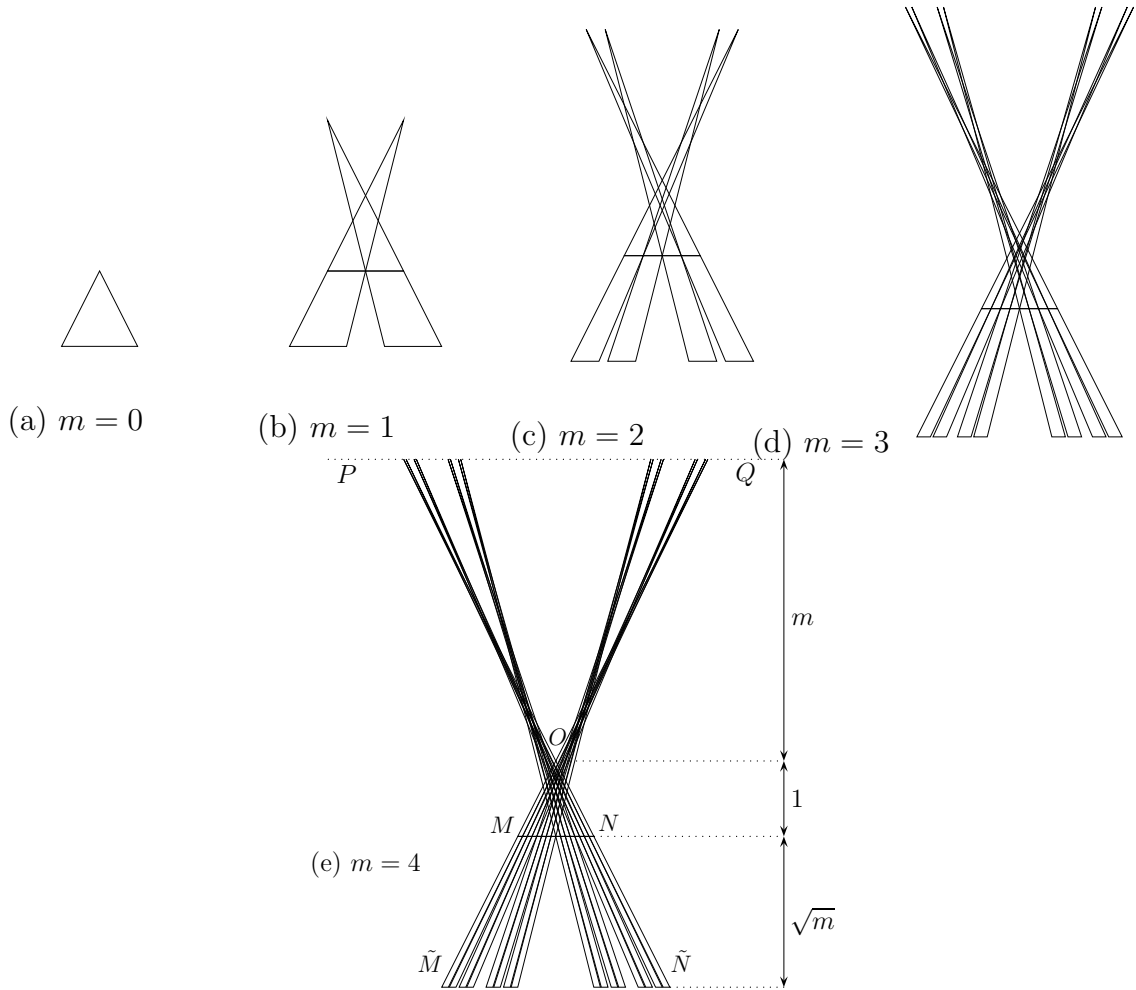
\begin{figure}[h]
\begin{picture}(0,355)

\rput(0,8.8){

\rput(0.5,-0.5){
\scalebox{0.25}{
\pspolygon(-2,0)(2,0)(0,4)
}
}

\rput(4,0.5){
\scalebox{0.25}{
\pspolygon(-2,0)(2,8)(0,0)
\pspolygon(2,0)(-2,8)(0,0)
\pspolygon(2,0)(4,-4)(1,-4)(0,0)
\pspolygon(-2,0)(-4,-4)(-1,-4)(0,0)
}
}

\rput(7.9,0.7){
\scalebox{0.25}{
\pspolygon(-2,0)(4,12)(-1,0)
\pspolygon(-1,0)(3,12)(0,0)
\pspolygon(2,0)(-4,12)(1,0)
\pspolygon(1,0)(-3,12)(0,0)
  \pspolygon(-2,0)(-4.8,-5.6)(-3.33,-5.6)(-1,0)
  \pspolygon(-1,0)(-2.866,-5.6)(-1.4,-5.6)(0,0)
  \pspolygon(2,0)(4.8,-5.6)(3.33,-5.6)(1,0)
  \pspolygon(1,0)(2.866,-5.6)(1.4,-5.6)(0,0)
}
}

\rput(12.6,0){
\scalebox{0.25}{
\pspolygon(2,0)(-6,16)(1.5,0) 
\pspolygon(1.5,0)(-5.667,16)(1,0) 
\pspolygon(-1,0)(4.333,16)(-0.5,0)
\pspolygon(-0.5,0)(4,16)(0,0)
\pspolygon(1,0)(-4.333,16)(0.5,0) 
\pspolygon(0.5,0)(-4,16)(0,0) 
\pspolygon(-2,0)(6,16)(-1.5,0)
\pspolygon(-1.5,0)(5.667,16)(-1,0)
  \pspolygon(5.4,-6.8)(2,0)(1.5,0)(4.69,-6.8)
  \pspolygon(-5.4,-6.8)(-2,0)(-1.5,0)(-4.69,-6.8)
  \pspolygon(4.55,-6.8)(1.5,0)(1,0)(3.84,-6.8)
  \pspolygon(-4.55,-6.8)(-1.5,0)(-1,0)(-3.84,-6.8)
  \pspolygon(-3.266,-6.8)(-1,0)(-0.5,0)(-2.55,-6.8)
  \pspolygon(3.266,-6.8)(1,0)(0.5,0)(2.55,-6.8)
  \pspolygon(-2.411,-6.8)(-0.5,0)(0,0)(-1.7,-6.8)
  \pspolygon(2.411,-6.8)(0.5,0)(0,0)(1.7,-6.8)
}
}

\rput(0.2,-1.5){(a) $m=0$}
\rput(3.5,-1.6){(b) $m=1$}
\rput(6.8,-1.7){(c) $m=2$}
\rput(10,-1.8){(d) $m=3$}
}

\rput(6.5,1.8){
\scalebox{0.25}{
\pspolygon(2,0)(-8,20)(1.75,0)
\pspolygon(1.5,0)(-7.875,20)(1.75,0)
\pspolygon(1.5,0)(-7.4583,20)(1.25,0)
\pspolygon(1,0)(-7.3333,20)(1.25,0)
\pspolygon(1,0)(-5.667,20)(0.75,0)
\pspolygon(0.5,0)(-5.542,20)(0.75,0)
\pspolygon(0.5,0)(-5.125,20)(0.25,0)
\pspolygon(0,0)(-5,20)(0.25,0)
\pspolygon(-2,0)(8,20)(-1.75,0)
\pspolygon(-1.5,0)(7.875,20)(-1.75,0)
\pspolygon(-1.5,0)(7.4583,20)(-1.25,0)
\pspolygon(-1,0)(7.3333,20)(-1.25,0)
\pspolygon(-1,0)(5.667,20)(-0.75,0)
\pspolygon(-0.5,0)(5.542,20)(-0.75,0)
\pspolygon(-0.5,0)(5.125,20)(-0.25,0)
\pspolygon(0,0)(5,20)(-0.25,0)
\pspolygon(6,-8)(2,0)(1.5,0)(5.25,-8)
  \psline(1.75,0)(5.65,-8)
  \psline(1.75,0)(5.6,-8)
\pspolygon(-6,-8)(-2,0)(-1.5,0)(-5.25,-8)
  \psline(-1.75,0)(-5.65,-8)
  \psline(-1.75,0)(-5.6,-8)
\pspolygon(5.084,-8)(1.5,0)(1,0)(4.336,-8)
  \psline(1.25,0)(4.682,-8)
  \psline(1.25,0)(4.734,-8)
\pspolygon(-5.084,-8)(-1.5,0)(-1,0)(-4.336,-8)
  \psline(-1.25,0)(-4.682,-8)
  \psline(-1.25,0)(-4.734,-8)
\pspolygon(-3.667,-8)(-1,0)(-0.5,0)(-2.918,-8)
  \psline(-0.75,0)(-3.31,-8)
  \psline(-0.75,0)(-3.262,-8)
\pspolygon(3.667,-8)(1,0)(0.5,0)(2.918,-8)
  \psline(0.75,0)(3.31,-8)
  \psline(0.75,0)(3.262,-8)
\pspolygon(-2.748,-8)(-0.5,0)(0,0)(-2,-8)
  \psline(-0.25,0)(-2.394,-8)
  \psline(-0.25,0)(-2.346,-8)
\pspolygon(2.748,-8)(0.5,0)(0,0)(2,-8)
  \psline(0.25,0)(2.394,-8)
  \psline(0.25,0)(2.346,-8)

\psline[linewidth=1.2pt,arrows=<->,arrowscale=3](12,-8)(12,0)
\psline[linewidth=1.2pt,arrows=<->,arrowscale=3](12,0)(12,4)
\psline[linewidth=1.2pt,arrows=<->,arrowscale=3](12,4)(12,20)
\psline[linewidth=2pt,linestyle=dotted,dotsep=8pt](2,0)(12,0)
\psline[linewidth=2pt,linestyle=dotted,dotsep=8pt](6,-8)(12,-8)
\psline[linewidth=2pt,linestyle=dotted,dotsep=8pt](0,4)(12,4)
\psline[linewidth=2pt,linestyle=dotted,dotsep=8pt](-12,20)(12,20)
\rput(-11,19.2){\scalebox{3.3}{$P$}}
\rput(10,19.2){\scalebox{3.3}{$Q$}}
\rput(0.1,5.3){\scalebox{3}{$O$}}
\rput(-2.8,0.6){\scalebox{3}{$M$}}
\rput(2.8,0.7){\scalebox{3}{$N$}}
\rput(-6.6,-6.6){\scalebox{3}{$\tilde M$}}
\rput(6.6,-6.6){\scalebox{3}{$\tilde N$}}

\rput(-10,-1.5){\scalebox{3.3}{(e) $m=4$}}
\rput(13.5,-4.5){\scalebox{3.3}{$\sqrt m$}}
\rput(12.7,2){\scalebox{3.3}{$1$}}
\rput(13,11.5){\scalebox{3.3}{$m$}}
}}

\end{picture}
\caption{Families of the 1st order, with $m = 0,\, 1,\, 2,\, 3,\, 4$. In Fig.~(e) the generating triangle of the family is $MON$, the set $\Convex$ is the triangle $\tilde{M}O\tilde{N}$, and the focal set lies on the line $PQ$.}
\label{fig_set1order}
\end{figure}

Note in passing that all families of the 1st order with fixed $m$ are linearly isomorphic. That is, for any two such families there exists a linear transformation that takes a set (the generating triangle, the $i$th triangle, the $i$th trapezoid, the $i$th focus, $i = 1,\ldots,2^m$) of the former family to the corresponding set of the latter one.

For the ratios of the elementary pairs in the family we have the estimate
\beq\label{for1}
\vk_i \le \frac{\sqrt{m} + 2^{-m}d}{m + 1 + \sqrt{m}} \to 0 \quad \text{as} \ \, m \to \infty.
\eeq
Further, for the areas of the union of triangles and the union of trapezoids we have
\beq\label{for2}
\frac{|\cup_{i=1}^{2^m}\triangle_i|}{|\cup_{i=1}^{2^m} \trap_i|} \le \frac{\ln m + 3/2}{\sqrt{m}}  \to 0 \quad \text{as} \ \, m \to \infty.
\eeq

\begin{remark}\label{zam 7}\rm
The union of trapezoids $\cup_{i=1}^{2^m} \trap_i$ is interpreted as the support of mirrors, and the union of triangles $\cup_{i=1}^{2^m}\triangle_i$ as the support of valleys in the family. Formula (\ref{for2}) means that as $m \to \infty$, the relative area of the support of valleys vanishes.
The ratios $\vk_i$ govern the resistance of mirrors in the family. Roughly speaking, formula (\ref{for1}) indicates that as $m \to \infty$, the resistances of the mirrors can be made arbitrarily close to their smallest value.
\end{remark}

\begin{remark}\label{zam 8}\rm
The first idea that comes to mind is to prove Theorem \ref{t1} directly by putting a large number of small copies of $(1,m)$-sets inside $\Om$. That is, we first put inside $\Om$ as large copy $F_0$ of a $(1,m)$-set as we can, then put several smaller copies $F_1, \ldots, F_p$ of $(1,m)$-sets inside the uncovered part $\Om \setminus F_0$, then put even smaller copies inside $\Om \setminus (F_0 \cup F_1 \cup \ldots \cup F_p)$, etc. The procedure is repeated until a sufficiently large part of $\Om$ is covered. The focal sets of all the copies should lie outside $\Om$. The problem, however, is that the parameter $m$ should go to infinity in this hierarchy of copies. As a result, the sets in the hierarchy become more and more complicated, and one cannot guarantee that the area of the uncovered part of $\Om$ goes to zero.
\end{remark}

We will use instead a more sophisticated construction. Take a $(1,m)$-set and consider the convex hall $\Convex$ of the union of the generating triangle $MON$ and the trapezoids of the family. It is the triangle homothetic to the generating one, with the ratio $\sqrt m + 1$ and with center of homothety at the vertex $O$ of the generating triangle. The height of $\Convex$ equals $\sqrt m + 1$. The focal set of the 1st order lies on the base of another triangle homothetic to $\triangle MON$, with the same center of homothety $O$ and with the ratio $m$ (see Fig.~\ref{fig_set1order}).

Notice that the set of trapezoids is very "sparse" in $\Convex$: the total area of the trapezoids is $\sqrt m d (1+o(1))$,\, $m \to \infty$, whereas the area of $\Convex$ is greater than $md/2$. Actually, the part of $\Convex$ occupied with the $(1,m)$-set is $\Convex$ minus the union of $2^m - 1$ angles with the vertices on the base of the generating triangle. These angles are called {\it angles associated with the $(1,m)$-set}; they do not mutually intersect.

Recall that each set obtained from a $(1,m)$-set by a linear transformation is again a $(1,m)$-set. Apply the following iterative procedure. At the first step take $2^m - 1$ sets obtained from the original $(1,m)$-set by linear transformations that take the generating triangle $MON$ to triangles, with height 1 and with the base parallel to $MN$, inscribed in the $2^m - 1$ angles associated with the original $(1,m)$-set (so that the vertices of the resulting generating triangles coincide with the vertices of the associated angles). The union of all the $2^m$ sets (including the original one) contains the triangle homothetic to the original generating triangle $MON$ with the ratio 2 and the same center $O$. In Fig.~\ref{fig_s2order} there are shown the original $(1,2)$-set and one of the three new $(1,2)$-sets with the generating triangle inscribed into the central associated angle.

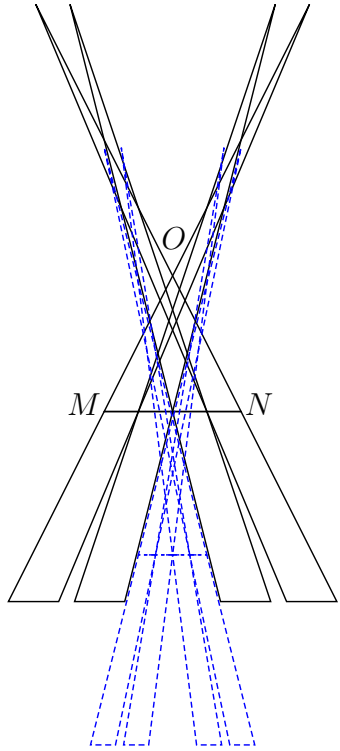
\begin{figure}[h]
\begin{picture}(0,290)

\rput(7,4.4){
\scalebox{0.45}{
\pspolygon[linewidth=1.2pt](-2,0)(4,12)(-1,0)
\pspolygon[linewidth=1.2pt](-1,0)(3,12)(0,0)
\pspolygon[linewidth=1.2pt](2,0)(-4,12)(1,0)
\pspolygon[linewidth=1.2pt](1,0)(-3,12)(0,0)
  \pspolygon[linewidth=1.2pt](-2,0)(-4.8,-5.6)(-3.33,-5.6)(-1,0)
  \pspolygon[linewidth=1.2pt](2,0)(4.8,-5.6)(3.33,-5.6)(1,0)
  \pspolygon[linewidth=1.2pt](-1,0)(-2.866,-5.6)(-1.4,-5.6)(0,0)
  \pspolygon[linewidth=1.2pt](1,0)(2.866,-5.6)(1.4,-5.6)(0,0)
}
\rput(-1.3,0.1){$M$}
\rput(1,0.1){$N$}
\rput(-0.12,2.3){$O$}
}

\rput(7,2.5){
\scalebox{0.45}{
\pspolygon[linecolor=blue,linestyle=dashed,linewidth=1.2pt](-1,0)(2,12)(-0.5,0)
\pspolygon[linecolor=blue,linestyle=dashed,linewidth=1.2pt](-0.5,0)(1.5,12)(0,0)
\pspolygon[linecolor=blue,linestyle=dashed,linewidth=1.2pt](1,0)(-2,12)(0.5,0)
\pspolygon[linecolor=blue,linestyle=dashed,linewidth=1.2pt](0.5,0)(-1.5,12)(0,0)
  \pspolygon[linecolor=blue,linestyle=dashed,linewidth=1.2pt](-1,0)(-2.4,-5.6)(-1.67,-5.6)(-0.5,0)
  \pspolygon[linecolor=blue,linestyle=dashed,linewidth=1.2pt](1,0)(2.4,-5.6)(1.67,-5.6)(0.5,0)
  \pspolygon[linecolor=blue,linestyle=dashed,linewidth=1.2pt](-0.5,0)(-1.433,-5.6)(-0.7,-5.6)(0,0)
  \pspolygon[linecolor=blue,linestyle=dashed,linewidth=1.2pt](0.5,0)(1.433,-5.6)(0.7,-5.6)(0,0)
}
}
\end{picture}
\caption{Starting the construction of a family of the 2nd order.}
\label{fig_s2order}
\end{figure}

At the second step we repeat the procedure as applied to each of the new $(1,m)$-sets. As a result we obtain $(2^m - 1)^2$ copies of the original $(1,m)$-set that fit into the new $(2^m - 1)^2$ associated angles. The union of all the obtained sets (their total number is $1 + (2^m - 1) + (2^m - 1)^2$) contains the triangle homothetic to $\triangle MON$ with the ratio 3.

Note in passing that te height of the generating triangle of each copy obtained this way is 1, and the base is smaller than $d$. Therefore the ratios of all obtained elementary pairs satisfy (\ref{for1}).

We repeat this procedure $\lfloor \sqrt{m} \rfloor + 1$ times, where $\lfloor \ldots \rfloor$ means the integer part. As a result, $\Convex$ will be contained in the union of all the obtained $(1,m)$-sets, including the original one and the ones obtained at the steps $1,\, 2,\ldots,\lfloor \sqrt{m} \rfloor + 1$.

\begin{definition}\label{o_family2step}\rm
The resulting family of elementary pairs is called a {\it family of the 2nd order}. The union of the obtained $(1,m)$-sets is called a $(2,m)$-{\it set}.
The union of the corresponding focal sets is called the {\it focal set of the 2nd order}.
\end{definition}

Note that the image of a family of the 2nd order under a linear transformation is again a family of the 2nd order.


Let us characterize in more detail the linear transformations that take the original $(1,m)$-set to the new ones at each step of the procedure. Each of these transformations can be decomposed into two ones. The first transformation preserves $O$ and transforms the segment $MN$ into itself. The second one is a translation that moves $O$ into the triangle $\Convex$. In other words, it is the translation by a vector $\overrightarrow{OA}$, where $A \in \Convex$.

The focal set of the 2nd order $F$ is the union of the original focal set of the 1st order and its images under these transformations. Let $[M'N']$ be the image of the segment $[MN]$ under the homothety with center at $O$ and the ratio $-m$. The original focal set of the 1st order lies on $[M'N']$. Any other point of $F$ is thus obtained from a point of the segment $[M'N']$ in the following way: first move it to another point of the segment and then translate it by a vector $\overrightarrow{OA}$, with $A \in \Convex$.

Thus, $F$ can be characterized as follows. Put the origin at $O$ and consider the algebraic sum $[M'N'] + \Convex$ of the segment $[M'N']$ and the triangle $\Convex$. Then
$$
F \subset [M'N'] + \Convex.
$$
The domain $[M'N'] + \Convex$ is actually a trapezoid; it is shown bounded by a dashed line in Fig.~\ref{fig_set2order}. It belongs to an $O(\sqrt m)$-neighborhood of the segment $[M'N']$.

By the construction, the $(2,m)$-set contains $\Convex$, and its elementary trapezoids are all contained in the triangle $2\Convex$ (where again the origin is at $O$); see Fig.~\ref{fig_set2order}.

\begin{figure}[h]
\begin{picture}(0,210)

\rput(6.5,2.9){
\scalebox{0.7}{
  \pspolygon[linewidth=0.7pt,fillstyle=solid,fillcolor=lightgray](1.1,-2.2)(-1.1,-2.2)(0,0)
\psline[linewidth=0.7pt](-3,6)(1.1,-2.2)(-1.1,-2.2)(3,6)
  \psline[linewidth=0.7pt](0.3,-0.6)(-0.3,-0.6)
  \rput(-0.66,-0.5){\scalebox{1.4}{$M$}}
  \pspolygon[linewidth=0.7pt](2.2,-4.4)(-2.2,-4.4)(0,0)
\rput(0.66,-0.5){\scalebox{1.4}{$N$}}
\pscircle[fillstyle=solid,fillcolor=gray,linewidth=0.4pt](0,-1.52){0.67}
\pspolygon[linewidth=0.4pt,linestyle=dashed](-3,6)(3,6)(4.1,3.8)(-4.1,3.8)
\psline[linestyle=dotted,linewidth=0.5pt](3,6)(5,6)
\psline[linestyle=dotted,linewidth=0.5pt](0,0)(5,0)
\psline[linestyle=dotted,linewidth=0.5pt](1.1,-2.2)(5,-2.2)
\psline[linewidth=0.3pt,arrows=<->,arrowscale=1.5](5,6)(5,0)
\psline[linewidth=0.3pt,arrows=<->,arrowscale=1.5](5,-2.2)(5,0)
\rput(0,6.3){\scalebox{1.4}{$m$}}
\rput(5.35,2.5){\scalebox{1.4}{$m$}}
\rput(6,-1.1){\scalebox{1.4}{$\sqrt m + 1$}}
\rput(-6.1,5){\scalebox{1.4}{$\sqrt m + 1$}}
\psline[linewidth=0.3pt,arrows=<->,arrowscale=1.5](-5,6)(-5,3.8)
\rput(0,0.6){\scalebox{1.4}{$O$}}
\rput(-3.2,6.2){\scalebox{1.4}{$N'$}}
\rput(3.3,6.3){\scalebox{1.4}{$M'$}}
\rput(-1.5,-1.3){\scalebox{2.2}{$\Convex$}}
\rput(0,-3.5){\scalebox{2.2}{$2\Convex$}}
\rput(0,5.3){\scalebox{1.4}{Focal set}}
\rput(0,4.7){\scalebox{1.4}{of the 2nd order}}
}}

\end{picture}
\caption{The $(2,m)$-set contains the shaded triangle $\Convex$. Its focal set is situated within the dashed line. The elementary trapezoids of the family are all contained in the triangle $2\Convex$.}
\label{fig_set2order}
\end{figure}
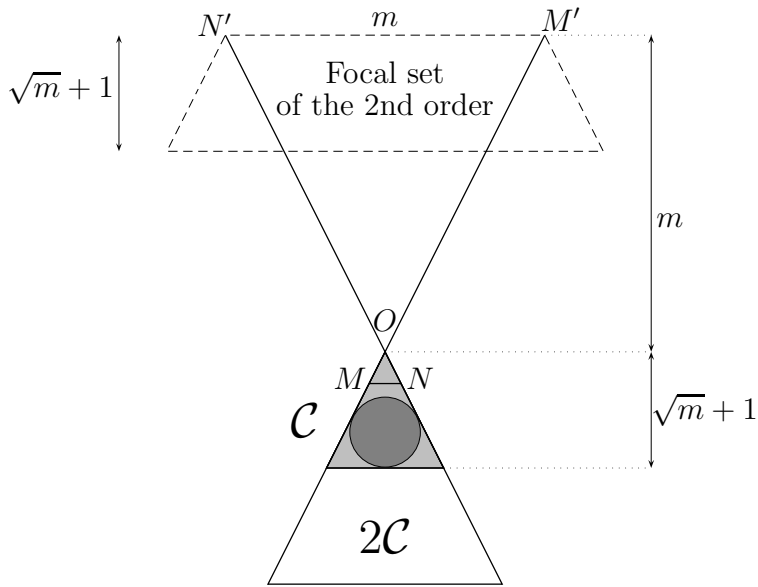

The elementary trapezoids of the family do not mutually intersect and are contained in the triangle $2\Convex$ with height $2\sqrt m + 2$ and base $(2\sqrt m + 2)d$. Therefore the sum of their areas is smaller than $(2\sqrt m + 2)^2 d/2$. The family of elementary trapezoids is divided into several sub-families of the 1nd order. For each sub-family of trapezoids and the corresponding sub-family of elementary triangles, inequality (\ref{for2}) is valid. Summing over all sub-families, we conclude that the area of the union of elementary triangles in the family of the 2nd order is smaller than
\beq\label{eltr}
\frac{\ln m + 3/2}{\sqrt m}\, \frac{(2\sqrt m + 2)^2 d}{2} = 2d\sqrt m\ln m (1 + o(1)), \quad m \to \infty.
\eeq

The circle inscribed in $\Convex$ has the radius $r_1 \sqrt m(1 + o(1))$, and the minimal concentric circle containing $2\Convex$ has the radius $r_2 \sqrt m(1 + o(1)), \ m \to \infty$, where $r_1$ and $r_2$ are positive constants that depend only on the generating triangle.

Now introduce a positive parameter $\om$ and let $m = \lfloor c^2/\om^2 \rfloor$, the constant $c > 0$ to be specified below. Let the generating triangle $MON$ be homothetic, with a negative ratio $-r$, to the triangle $\AAA\BBB\CCC$ indicated in Lemma \ref{l2main}. Then $\triangle M'ON'$ is homothetic to $\triangle \AAA\BBB\CCC$ with the ratio $mr.$ Apply to the family of the 2nd order the composition of two transformations. The first one is the translation that takes the center of the inscribed circle to $O$. (The translation distance is $O(\sqrt m)$.) The second one is a homothety with the ratio $1/(mr)$ that takes $O$ to $\CCC$, $M'$ to $\AAA$, and $N'$ to $\BBB$. The image is another family of the 2nd order (let it be denoted by $\{ \triangle_i^\om,\, \trap_i^\om \}$) that satisfies the following properties.
\vspace{1mm}

(a) The union $\cup_i \overline{(\triangle_i^\om \cup \trap_i^\om)}$ of the obtained elementary sets contains the circle centered at $C$ with radius $\frac{\om}{c} \frac{r_1}{r} (1 + o(1)), \ \om \to 0.$ Take $c < r_1/r$; then for $\om$ sufficiently small the union contains $B_\om(C).$
\vspace{1mm}

(b) Since the ratio of homothety is $1/(mr)$, the area $|\cup_i \triangle_i^\om|$ of the union of triangles in the resulting family is smaller than  $1/(mr)^2$ times the expression in (\ref{eltr}). Taking into account that $1/m \sim \om^2$, we conclude that
$$
\frac{|\cup_i \triangle_i^\om|}{\om^2} \sim \om \ln\frac{1}{\om} \to 0 \quad \text{as} \ \, \om \to 0.
$$

(c) The ratio of an elementary pair is invariant under a homothety; therefore the ratios of all elementary pairs of the family satisfy (\ref{for1}). Thus, we have
$$
\max_i \vk_i^\om \le \frac{\sqrt{m} + 2^{-m}d}{m + 1 + \sqrt{m}} \sim \om \to 0 \quad \text{as} \ \, \om \to 0.
$$

(d) The focal set of the family is contained in the image of the trapezoid $[M'N'] + \Convex$, which in turn belongs to an $O(1/\sqrt m)$-neighborhood of the segment $[\AAA\BBB]$. Taking into account that $1/\sqrt m \sim \om$, we conclude that the focal set belongs to the $O(\om)$-neighborhood of $[\AAA\BBB]$.
\vspace{1mm}

(e) The union $\cup_i \trap_i^\om$ of the trapezoids of the family belongs to the image of $2\Convex$, which in turn belongs to the circle centered at $C$ with radius $\frac{\om}{c} \frac{r_2}{r} (1 + o(1)), \ \om \to 0.$ That is, $\cup_i \trap_i^\om \subset B_{O(\om)}(C).$
\vspace{1mm}

Thus, Lemma \ref{l2main} is proved.

\section{Proofs of Theorems \ref{t2} and \ref{t3}}

\subsection{Theorem \ref{t2}}

The second equality in (\ref{ft2}) is a consequence of Theorem \ref{t1} and Proposition \ref{propo}. It remains to prove the first one.

The trajectory of a billiard particle is naturally parameterized by the time $t$. Let a particle initially move according to $x(t) = x_0, \ z(t) = -t \ (x_0 \in \Om)$, then make several (finitely many) reflections from the graph of $B_u$, and its final velocity be not vertical, $v^+(t) \neq (0,0,1)$. At a point, say $x_1$, the $x$-projection $x(t)$ of the particle leaves $\Om$ (this implies that $x_1 \in \pl\Om$). Reparameterize the part of the trajectory between the point of the first impact and the point where the $x$-projection leaves $\Om$, the parameter being the path length $s$ of the $x$-projection between $x_0$ and the current point.

Consider the $z$-coordinate $z(s)$ of the particle as a function of $s, \ 0 \le s \le s_0$; here $s_0$ is the total length of the $x$-projection (which is a broken line) between $x_0$ and $x_1$; see Fig.~\ref{fig_parameter}. The breaks of the line correspond to the points of reflection of the particle.

\begin{figure}[h]
\begin{picture}(0,155)

\rput(4,0.7){
\scalebox{1}{
\psline[linewidth=0.6pt,arrows=->,arrowscale=1.2](-1,0)(7,0)
\psline[linewidth=0.4pt,linecolor=red,linestyle=dashed,arrows=->,arrowscale=1.5](0,4.5)(0,3)
\psline[linewidth=0.7pt,linestyle=dotted](0,3)(0,0)
\psline[linewidth=0.8pt,linecolor=red,arrows=->,arrowscale=1.5](0,3)(0.5,2)(1.5,1)(3,0.5)
\psline[linewidth=0.4pt,linecolor=red,linestyle=dashed,arrows=->,arrowscale=1.5](3,0.5)(6,-0.5)
\psline[linewidth=0.7pt,linestyle=dotted](0.5,2)(0.5,0)
\psline[linewidth=0.7pt,linestyle=dotted](1.5,1)(1.5,0)
\psline[linewidth=0.7pt,linestyle=dotted](3,0.5)(3,0)
\rput(0,-0.3){\scalebox{1}{$0$}}
\rput(6.7,-0.25){\scalebox{1}{$s$}}
\rput(3,-0.3){\scalebox{1}{$s_0$}}
\rput(1.8,1.9){\scalebox{1}{$z(s)$}}
}}

\end{picture}
\caption{A particle trajectory parameterized by the path length. The breaks of the line correspond to reflections of the particle.}
\label{fig_parameter}
\end{figure}
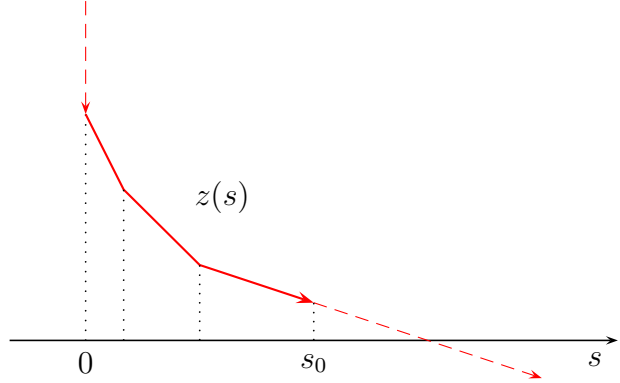

Note that $s_0 \ge \text{dist}(x_0, \pl\Om)$ and $0 \le z(s) \le M$ for all $0 \le s \le s_0$.

Let us show that the $z$-coordinate $v_3$ of the velocity $v$ of the particle does not decrease at each impact. Indeed, for the velocities $v$ and $v^+$ before and after the impact we have
$$
v^+ = v - 2\langle v,\, n \rangle n,
$$
and therefore, $v_3^+ = v_3 - 2\langle v,\, n \rangle n_3,$ where $n$ is the normal to $\pl(B_u)$ at the point of impact and $\langle v,\, n \rangle < 0$. There may be two cases:

(i) the particle reflects from the graph of $u$; then $n$ is the upper normal to graph$(u)$,
$$
n = \frac{(-\nabla u(x), \ 1)}{\sqrt{1 + |\nabla u(x)|^2}};
$$

(ii) the projection of the reflection point is a point of discontinuity of graph$(u)$; that is, the particle is reflected from a "vertical wall". In this case the 3rd component of $n$ is zero, $n = (n_1, n_2, 0)$.

In the case (i), $v_3^+ - v_3 > 0$; that is, the 3rd component of the velocity increases. In the case (i), $v_3^+ - v_3 = 0$; that is, the 3rd component of the velocity remains constant.

The following useful formula relates the derivative $z'(s)$ and the velocity $v$ of the particle at the corresponding point:
$$
\frac{(1, \ z'(s))}{\sqrt{1 + z'^2(s)}} = \Big(\sqrt{v_1^2 + v_2^2}, \ v_3\Big).
$$
It implies that the function $z'(s)$ is constant between impacts, and the increment of $z'$ is nonnegative at each impact. Thus, the function $z(s)$ is convex, and therefore
$$
z'(s_0) \ge \frac{z(s_0) - z(0)}{s_0} \ge -\frac{M}{d(x)}.
$$
Recall the brief notation $d(x) = \text{dist}(x, \pl\Om)$. Therefore the third component of the final velocity $v^+(x,u)$ of the particle satisfies
\beq\label{ineq2}
v_3^+(x,u) = \frac{z'(s_0)}{\sqrt{1 + z'^2(s_0)}} \ge -\frac{M/d(x)}{\sqrt{1 + M^2/d^2(x)}} = -\frac{M}{\sqrt{M^2 + d^2(x)}}.
\eeq
If, on the other hand, the final velocity is vertical then its 3rd component $v_3^+ = 1$ obviously satisfies (\ref{ineq2}).

Combining (\ref{o Res2}), (\ref{o_phi}), and (\ref{ineq2}), we obtain
$$
F(u) \ge \phi(\Om,M) \quad \text{for all} \quad u \in \UUU_{\Om,M}.
$$
On the other hand, we have $\SSS_{\Om,M} \subset \UUU_{\Om,M}$, therefore by Theorem \ref{t1}
$$
\inf_{u \in \UUU_{\Om,M}} F(u) \le \inf_{u \in \SSS_{\Om,M}} F(u) = \phi(\Om,M).
$$
Thereby the first relation in (\ref{ft2}) is also proved.

\subsection{Theorem \ref{t3}}

Since $D_{\Om,M} \subset \BBB_{\Om,M},$ it suffices to prove the second equality in (\ref{ft3}).

Let $\NNN_\ve(A)$ denote the $\ve$-neighborhood of the set $A$. Fix $\ve > 0$ and denote
$$
\tilde\Om = \Om \setminus \NNN_\ve(\pl\Om).
$$
By Lemma \ref{l_4} there exists a finite set of points $O_i \in \Om \cap \NNN_\ve(\pl\Om)$, a domain $V \subset \tilde\Om$ with $|V| < \ve$, and a function $u \in \SSS_{\tilde\Om,M/2}$ such that the particle corresponding to a point $x \in V$ is reflected vertically from graph$(u)$, and the particle corresponding to a regular point of $\tilde\Om \setminus V$ after the reflection passes through a point $(O_i, 0)$.

For each $i$ find a ball $\UUU_i \subset \RRR^2 \setminus \Om$ such that the distance between $O_i$ and each point of $\UUU_i$ is smaller than $\ve$ (see Fig.~\ref{fig_Omega}). In other words, $\UUU_i \subset B_\ve(O_i) \setminus \Om.$
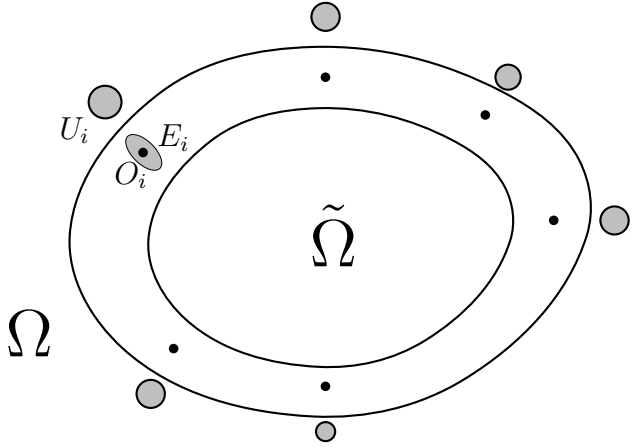
\begin{figure}[h]
\begin{picture}(0,175)

\rput(6,0.5){
\scalebox{1}{
\rput(0,-0.2){
\psecurve(-2.5,1)(0,0.1)(2,0.6)(3.6,2.5)(3.5,3.5)(2.5,4.4)(0,5)(-2,4.4)(-3.2,2.5)(-2.5,1)(0,0.1)(2,0.6)}
\scalebox{0.7}{
\rput(0,0.7){
\psecurve[linewidth=1.14pt](-2.5,1)(0,0.1)(2,0.6)(3.6,2.5)(3.5,3.5)(2.5,4.4)(0,5)(-2,4.4)(-3.2,2.5)(-2.5,1)(0,0.1)(2,0.6)
}}
\rput(0,-0.2){\rput{-45}(-3.1,-0.5){\psellipse[linewidth=0pt,fillstyle=solid,fillcolor=lightgray](-2.4,3.4)(0.3,0.15)}}
\psdots(-2,0.8)(0,0.3)(3,2.5)(2.1,3.9)(0,4.4)(-2.4,3.4)
\pscircle[fillstyle=solid,fillcolor=lightgray](-2.3,0.2){0.2}
\pscircle[fillstyle=solid,fillcolor=lightgray](0,-0.3){0.14}
\pscircle[fillstyle=solid,fillcolor=lightgray](3.8,2.5){0.2}
\pscircle[fillstyle=solid,fillcolor=lightgray](2.4,4.4){0.18}
\pscircle[fillstyle=solid,fillcolor=lightgray](0,5.2){0.2}
\pscircle[fillstyle=solid,fillcolor=lightgray](-2.9,4.07){0.23}
\rput(-3.9,1){\scalebox{2.1}{$\Om$}}
\rput(0.1,2.3){\scalebox{2.1}{$\tilde\Om$}}
\rput(-2.56,3.1){$O_i$}
\rput(-2,3.6){$E_i$}
\rput(-3.3,3.7){$\UUU_i$}
}}

\end{picture}
\caption{A part of the construction on the $x$-plane.}
\label{fig_Omega}
\end{figure}
Let $K_i$ be the cone with vertex at $(O_i, M/2)$ and with the base $\UUU_i \times \{ 0 \}$. That is, $K_i$ is the union of rays with the endpoints at $(O_i, M/2)$ through all the points $(x,0), \ x \in \UUU_i.$ Let $D_i$ be the intersection of $K_i$ with $\pl\Om \times [0,\, M/2].$

Then for each $i$ we choose $x_i \in \UUU_i$ and denote by $l_i$ the ray with the endpoint $(O_i, M/2)$ through $(x_i,0).$ We select the points $x_i$ in such a way that the resulting rays $l_i$ do not mutually intersect.

Next for each $i$ we choose a paraboloid of rotation $P_i$ with focus at $(O_i, M/2)$, with the axis containing $l_i$, and such that $l_i \cap P_i = \emptyset.$ Denote by $\PPP_i$ the convex hull of $P_i$; that is, $\PPP_i$ is the convex closed domain bounded by the paraboloid $P_i.$ Our choice implies that the ray $l_i$ is contained in the axis of $P_i$ and in the domain $\PPP_i$. Denote $E_i = \PPP_i \cap (\RRR^2 \times \{ M/2 \})$. Thus, $E_i$ is an ellipse (with its interior) containing $O_i$ on the horizontal plane $z = M/2$ (see Fig.~\ref{fig_Omega}). We impose the additional conditions:

(i) the domains $\PPP_i$ are mutually disjoint;

(ii) $\PPP_i \cap (\pl\Om \times \RRR) \subset D_i;$

(iii) $E_i \subset \Om \setminus \overline{\tilde{\Om}};$ that is, the ellipses $E_i$ do not intersect $\pl\Om$ and $\pl(\tilde\Om).$

The conditions (i)--(iii) mean that the paraboloids $P_i$ should be "sufficiently thin".

Take a body $B_\ve$ which is the union of four domains $B_i = B_i^\ve, \ 1 \le i \le 4,$
$$
B_\ve = B_1 \cup B_2 \cup B_3 \cup B_4
$$
(see Fig~\ref{fig_final}). Here $B_1$ is the subgraph of the function $M/2 + u$,
$$
B_1 = \{ (x,z) : \, x \in \overline{\tilde\Om}, \ 0 \le z \le M/2 + u(x) \},
$$
and
$$
B_2 = (\bar{\Om} \times [0,\, M/2]) \setminus ( \cup_i \PPP_i ).
$$
Further, we take two open domains $\Om_0 \subset \Om_1 \subset \Om \setminus \tilde{\Om}$ such that $\pl\Om_1 \cap \pl\tilde\Om = \emptyset$ and for each $i, \ \Om_0 \cap E_i = \emptyset$ and $\cup_i E_i \subset \Om_1.$ One can take, for instance, $\Om_0 = \Om \cap \NNN_{\ve'}(\pl\Om)$ and $\Om_1 = \Om \setminus \overline{\NNN_{\ve'}(\tilde\Om)}$ with $\ve' > 0$ sufficiently small.

We define
$$
B_3 = \bar{\Om}_0 \times [0,\, M],
$$
$$
B_4 = \bar{\Om}_1 \times [M - \ve',\, M],
$$
where $\ve'$ is taken so small that the domain $B_4$ does not intersect Conv$\big( (\tilde\Om \times [0,\, M]) \cup (\Om \times [0,\, M/2]) \big),$ and therefore is inaccessible for trajectories of the particles reflected from graph$(M/2 + u).$ The domain $B_4$ serves to "shield" the planar domains $E_i$ (which are inlets of the "hollows" $\PPP_i \cap \{ z \le M/2 \}$) from incident particles with vertical direction. The domain $B_3$ serves to make the whole body $B_\ve$ connected.

We have $\bar{\Om} \times \{0\} \subset B_\ve \subset \bar{\Om} \times [0,\, M]$. The surface $\pl B_\ve \setminus (\pl\Om \times \RRR)$ is piecewise smooth by the definition of $B_\ve$. As we will see below, the billiard scattering outside $B_\ve$ is regular. Therefore $B_\ve \in \BBB_{\Om,M}.$

The domains $B_3 \cup B_4$ and $B_1$ are obviously connected. The section of $B_2$ by a horizontal plane $z = c, \ 0 \le c \le M/2$ is also connected and has nonempty intersection with $B_1$; therefore $B_1 \cup B_2$ is connected. Since the domains $B_3 \cup B_4$ and $B_1 \cup B_2$ have nonempty intersection, their union $B_\ve$ is connected.

If a flow particle incident on $B_\ve$ corresponds to a regular point of $\Om \setminus \overline{\tilde{\Om}},$ then either $x \in \Om_1$, or $x \in (\Om \setminus \overline{\tilde{\Om}}) \setminus \Om_1.$ In both cases the particle is reflected vertically, and so, $v_3(x; B_\ve) = 1.$

If a particle corresponds to a regular point of $\tilde\Om$, then either $x \in V$, or $x$ is a regular point of $\tilde\Om \setminus V.$ In the former case the particle is reflected vertically, and in the latter case the reflected particle passes through a point $(O_i, M/2)$ and then moves in $\PPP_i.$ It may further happen that it makes one more reflection (which is necessarily from $P_i$) and then moves freely parallel to $l_i$ (see Fig.~\ref{fig_final}).

\begin{figure}[h]
\begin{picture}(0,210)
\rput(1.3,0.7){
\pspolygon[fillstyle=solid,fillcolor=gray](0,0)(11,0)(11,3)(0,3)
\pspolygon[fillstyle=solid,fillcolor=gray](0,3)(0,6)(1.5,6)(1.5,5.5)(0.5,5.5)(0.5,3)
\pspolygon[fillstyle=solid,fillcolor=gray](11,3)(11,6)(9.5,6)(9.5,5.5)(10.5,5.5)(10.5,3)
\pscustom[fillstyle=solid,fillcolor=white]{
\pscurve(0,2)(0.4,2.6)(0.75,3)
\pscurve[liftpen=1](1.1,3)(0.6,2)(0,1.1)
}
\pspolygon[linewidth=0pt,linecolor=white,fillstyle=solid,fillcolor=lightgray](2,3)(2,6)(4.5,6)(4.5,3)
\rput(2.25,3.2){$A_1$}
\rput(2.25,5.8){$A_4$}
\rput(4.25,3.2){$A_2$}
\rput(4.25,5.8){$A_3$}

\psline[linewidth=1.2pt,linecolor=white](1.1,3)(0.75,3)
\psline[linewidth=1.2pt,linecolor=white](0,1.2)(0,2)
\psline[linecolor=red,arrows=->,arrowscale=1.5](4.8,7)(4.8,6)
\psline[linecolor=red,arrows=->,arrowscale=1.5](4.8,6)(4.8,5.6)(0.6,2.8)(-0.7,0)
\psdots(0.9,3)
\rput(0.8,3.3){$O_i$}
\psline(-0.5,0)(-0.9,0)
\psline(2,0)(2,3)
\psline(9,0)(9,3)
\rput(-1.2,-0.2){$\UUU_i$}
\rput(-0.45,1.9){$P_i$}
\psline(2,3)(2,6)
\psline(9,3)(9,6)
\psline(4.5,5)(5.05,6)
\psline(5.05,5.15)(5.5,6)
\psline(5.5,6)(5.95,5.15)
\psline(5.95,6)(6.5,5)
\psline(6.5,6)(6.95,5)
\psline(6.95,6)(7.35,5)
\psline(7.35,6)(7.7,5)
\psline(7.7,6)(8,5)
\psline(8,6)(8.25,5)
\psline(8.25,6)(8.5,5)
\psline(8.5,6)(8.7,5)
\psline(8.7,6)(8.85,5)
\psline(8.85,6)(9,5)

\pspolygon[linewidth=0pt,fillstyle=solid,fillcolor=gray](4.5,3)(4.5,5)(5.05,6)(5.05,5.15)(5.5,6)(5.95,5.15)(5.95,6)(6.5,5)
(6.5,6)(6.95,5)
(6.95,6)(7.35,5)
(7.35,6)(7.7,5)
(7.7,6)(8,5)
(8,6)(8.25,5)
(8.25,6)(8.5,5)
(8.5,6)(8.7,5)
(8.7,6)(8.85,5)
(8.85,6)(9,5)(9,3)
\rput(-0.6,4.5){\scalebox{1.5}{$B_3$}}
\psline[linewidth=0.4pt]{<-}(11.6,3)(11.6,4.2)
\psline[linewidth=0.4pt]{->}(11.6,4.8)(11.6,6)
\rput(11.7,4.5){$M/2$}
\psline[linewidth=0.4pt]{<-}(11.6,0)(11.6,1.2)
\psline[linewidth=0.4pt]{->}(11.6,1.8)(11.6,3)
\rput(11.7,1.5){$M/2$}
\psline[linestyle=dotted](11,3)(11.7,3)
\rput(0.75,6.4){\scalebox{1.5}{$B_4$}}
\rput(10.25,6.4){\scalebox{1.5}{$B_4$}}
\rput(6.5,4){\scalebox{1.5}{$B_1$}}
\rput(5.5,1.5){\scalebox{1.5}{$B_2$}}
\psline[linewidth=0.4pt]{<-}(0,-0.5)(0.8,-0.5)
\psline[linewidth=0.4pt]{->}(1.2,-0.5)(2,-0.5)
\rput(1,-0.5){$\ve$}
\psline[linewidth=0.4pt]{<-}(9,-0.5)(9.8,-0.5)
\psline[linewidth=0.4pt]{->}(10.2,-0.5)(11,-0.5)
\rput(10,-0.5){$\ve$}
}
\end{picture}
\caption{The cross section of the body $B_\ve$ by a vertical plane and the trajectory of a particle are shown. An unlikely case when all segments of the trajectory lie in one plane is depicted. The rectangle $A_1A_2A_3A_4$ shown lightgray projects on the valley $V$ and therefore does not contain points of $B_\ve$.}
\label{fig_final}
\end{figure}
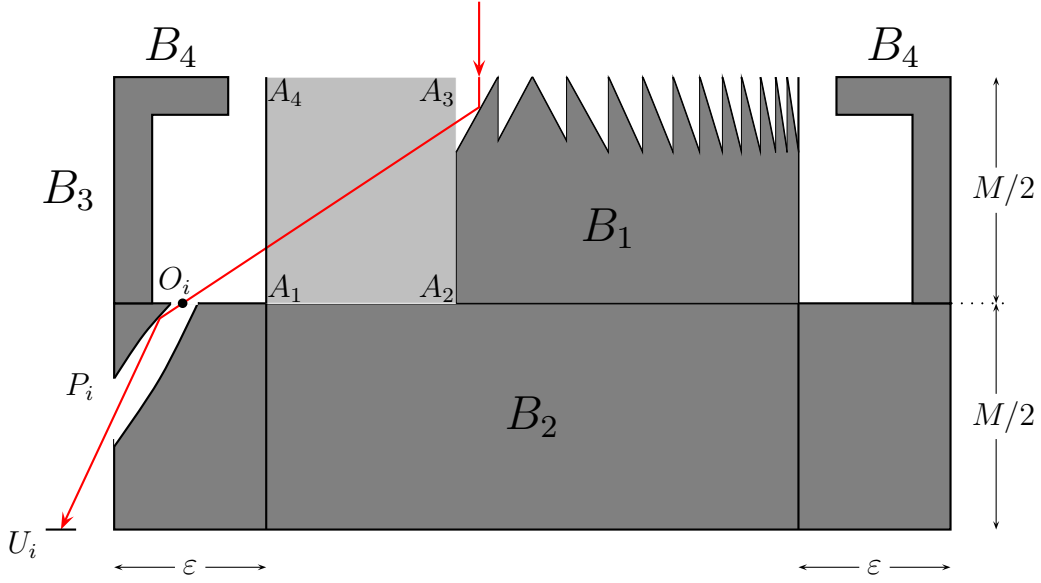

If the particle makes no reflections anymore, then it necessarily intersects $\pl\Om \times [0,\, M/2]$ at a point of $D_i$, and then intersects the disc $\UUU_i \times \{ 0 \}.$ In both cases the final motion is parallel to a line through $(O_i, M/2)$ and $(x,0), \ x \in \UUU_i.$ This implies that the 3rd component of the final velocity $v^+(x; B_\ve)$ satisfies
$$
v_3^+(x; B_\ve) < -\frac{M/2}{\sqrt{M^2/4 + \ve^2}}.
$$

Thus, each particle makes no more than two reflections, and so, $B_\ve$ satisfies DIC. Since $B_\ve$ is connected, we conclude that $B_\ve \in \DDD_{\Om,M}.$

The resistance of $B_\ve$ equals
$$
R(B_\ve) = \int_\Om \frac{1 + v_3^+(x;B_\ve)}{2}\, dx = \int_{(\Om\setminus\tilde\Om)\cup V} (\cdots) + \int_{\tilde\Om\setminus V} (\cdots)
$$
$$
< |\Om\setminus\tilde\Om| + |V| + |\tilde\Om\setminus V| \cdot \frac{1}{2} \left( 1 - \frac{M/2}{\sqrt{M^2/4 + \ve^2}} \right).
$$
Taking into account that $|\Om\setminus\tilde\Om| < \ve |\pl\Om|, \ |V| < \ve,$ and $|\tilde\Om\setminus V| < |\Om|,$ we conclude that $R(B_\ve) \to 0$ as $\ve \to 0.$ Theorem \ref{t3} is proved.

\subsection*{Acknowledgements}

This work was supported by Portuguese funds through CIDMA -- Center for Research and Development in Mathematics and Applications and FCT -- Portuguese Foundation for Science and Technology, within the project PEst-OE/MAT/UI4106/2014, as well as by the FCT research project PTDC/MAT/113470/2009. The author is grateful to
Evgeny Lakshtanov for the help in preparing Figure~\ref{fig4min}.


\begin{thebibliography}{99}

\bibitem{Bes}
A.\,S. Besicovitch. {\it The Kakeya problem}. Amer. Math. Monthly {\bf 70}, 697-706 (1963).

\bibitem{BrFK}
F. Brock, V. Ferone and B. Kawohl.
\textit{A symmetry problem in the calculus of variations}. Calc. Var. {\bf 4}, 593-599 (1996).

\bibitem{BB}
D. Bucur and G. Buttazzo.
{\it Variational Methods in Shape Optimization Problems}. Birkh\"auser (2005).

\bibitem{Bsurvey}
G. Buttazzo. {\it A survey on the Newton problem of optimal profiles}. In "Variational Analysis and
Aerospace Engineering", Springer Optim. Appl. {\bf 33}, Springer, New York, 33-48 (2009).

\bibitem{BFK}
G. Buttazzo, V. Ferone, and B. Kawohl.
\textit{Minimum problems over sets of concave functions and related questions}. Math. Nachr. {\bf 173}, 71-89 (1995).

\bibitem{BK}
G. Buttazzo and B. Kawohl.
\textit{On Newton's problem of minimal resistance}. Math. Intell. {\bf 15}, 7-12 (1993).

\bibitem{CL1}
M. Comte, T. Lachand-Robert. \textit{Newton's problem of the body of minimal resistance under a single-impact assumption}.
Calc. Var. Partial Differ. Equ. {\bf 12}, 173-211 (2001).

\bibitem{CL2}
M. Comte, T. Lachand-Robert. \textit{Existence of minimizers for Newton's problem of the body of minimal resistance under a single-impact assumption}.
J. Anal. Math. {\bf 83}, 313-335 (2001).

\bibitem{CL3}
M. Comte and T. Lachand-Robert. {\it Functions and domains having minimal resistance under a single-impact assumption}.\,
SIAM J. Math. Anal. {\bf 34}, 101-120 (2002).

\bibitem{Kawohl}
B. Kawohl. {\it Some nonconvex shape optimization problems}. Optimal shape design (Tr\'oia, 1998), 7-46, Lecture Notes in Math., {\bf 1740}, Springer, Berlin, 2000.

\bibitem{LO}
T. Lachand-Robert and E. Oudet.
\textit{Minimizing within convex bodies using a convex hull method}. SIAM J. Optim. {\bf 16}, 368-379 (2006).

\bibitem{LP1}
T. Lachand-Robert, M.~A. Peletier.
\textit{Newton's problem of the body of minimal resistance in the class of convex developable functions}. Math. Nachr. {\bf 226}, 153-176 (2001).

\bibitem{N}
I. Newton. {\it Philosophiae Naturalis Principia Mathematica}. 1687.

\bibitem{JDCS}
A. Plakhov. {\it Newton's problem of minimal resistance for bodies containing a half-space.} J. Dynam. Control Syst. {\bf 10}, 247-251 (2004).

\bibitem{1dimMK}
A. Plakhov. {\it Precise solutions of the one-dimensional Monge-Kantorovich problem}. Sbornik: Math. {\bf 195}, 1291-1307 (2004).

\bibitem{ARMA}
A. Plakhov. {\it Billiards and two-dimensional problems of optimal resistance}. Arch. Ration. Mech. Anal. {\bf 194}, 349-382 (2009). 

\bibitem{comment}
A Plakhov. {\it Comment on "Functions and domains having minimal resistance under a single-impact assumption" [SIAM J. Math. Anal. 34 (2002), pp.\,101-120]}. SIAM J. Math. Anal. {\bf 41}, 1721-1724 (2009). 

\bibitem{CJM}
A Plakhov. {\it Optimal roughening of convex bodies}. Canad. J. Math. {\bf 64}, 1058-1074 (2012). 

\bibitem{ebook}
A Plakhov. {\it Exterior Billiards: Systems with Impacts Outside Bounded Domains}. Springer, New York, 2012.

\bibitem{hollows}
A Plakhov. {\it The problem of minimal resistance for functions and domains}. To appear in SIAM J. Math. Anal.

\bibitem{W}
G. Wachsmuth. {\it The numerical solution of Newton's problem of least resistance.} Preprint, Chemnitz University of Technology, 2013.

\end{thebibliography}
\end{document}